\numberwithin{equation}{section}
\newcommand{\core}{\mathop{\mathrm{core}}}
\newcommand{\Aut}{\mathop{\mathrm{Aut}}}
 \newcommand{\al}{\alpha}
\newtheorem{Theorem}{Theorem}[section]
\newtheorem{Proposition}[Theorem]{Proposition}
\newtheorem{Lemma}[Theorem]{Lemma}
\newtheorem{Corollary}[Theorem]{Corollary}
\theoremstyle{definition}
\begin{document}

\title{On the generalized Fitting height and insoluble length of finite groups}

\author{Robert M. Guralnick}
\address{Department of Mathematics, University of Southern California,
Los Angeles, CA 90089-2532, USA}
\email{guralnic@usc.edu}

\author{Gareth Tracey}
\address{Alfr\'ed R\'enyi Institute of Mathematics, Hungarian Academy of Sciences, Re\'altanoda utca
13-15, H-1053, Budapest, Hungary}
\email{g.m.tracey6@gmail.com}

\thanks{The first author was partially supported by the NSF
grant DMS-1901595, while the second author has received funding from the European Research Council (ERC) under the European Unions Horizon 2020 research and
innovation programme (grant agreement No. 741420)}
\subjclass[2010]{20D30,20D35,20F45}
\begin{abstract} We prove two conjectures of E. Khukhro and P. Shumyatsky concerning the Fitting height and insoluble length of finite groups. As a by-product of our methods, we also prove a generalization of a result of Flavell, which itself generalizes Wielandt's Zipper Lemma and provides a characterization of subgroups contained in a unique maximal subgroup.  We also derive a number of consequences of our theorems, including some applications to the set of odd order elements of a finite group inverted by an involutory automorphism.  
 \end{abstract}
\maketitle

\section{Introduction}
A classical result of R. Baer \cite{Baer} states that an element $x$ of a finite group $G$ is contained in the Fitting subgroup $F(G)$ of $G$ if and only if $x$ is a left Engel element of $G$. That is, $x\in F(G)$ if and only if there exists a positive integer $k$ such that $[g,_k x]:=[g,x,\hdots,x]$ (with $x$ appearing $k$ times) is trivial for all $g\in G$. (In this paper, we use left normed commutators, so that $[x_1,x_2,x_3\hdots,x_k]:=[[\hdots[[x_1,x_2],x_3],\hdots],x_k]$). The result was generalized by E. Khukhro and P. Shumyatsky in \cite{KS} via an analysis of the sets
\begin{align*}E_{G,k}(x) &:=\{[g,_k x]\text{ : }g\in G\}. \end{align*}
In this notation, Baer's Theorem states that $x\in F(G)$ if and only if $E_{G,k}(x)=\{1\}$ for some positive integer $k$. The generalization of Khukhro and Shumyatsky takes three directions. First, if $G$ is soluble then a complete generalization is obtained: \cite[Theorem 1.1]{KS} proves that if the Fitting height of the subgroup $\langle E_{G,k}(x)\rangle$ (for any $k$) is $h$, then $x$ is contained in $F_{h+1}(G)$ --- the $(h+1)$-st Fitting subgroup of $G$. 

Secondly, they also discuss analogous results for insoluble groups: for a finite group $G$, we will write $F^{\ast}(G)$ for the generalized Fitting subgroup of $G$. That is, $F^{\ast}(G):=F(G)E(G)$, where $F(G)$ is the Fitting subgroup of $G$, and 
$$E(G):=\langle N \text{ : } N\text{ a quasisimple subnormal subgroup of } G\rangle$$ is the \emph{layer} of $G$. We then write $F^{\ast}_i(G)$ for the $i$-th generalized Fitting subgroup of $G$. That is, $F^{\ast}_1(G):=F^{\ast}(G)$ is the generalized Fitting subgroup of $G$, and $F^{\ast}_i(G)$ is the inverse image of $F^{\ast}(G/F^{\ast}_{i-1}(G))$ in $G$ for $i\geq 2$. Thus, in particular, $F_i(G)=F^{\ast}_i(G)$ when $G$ is soluble. It is proven in \cite[Theorem 1.2]{KS} that if $\langle E_{G,k}(x)\rangle$ has generalized Fitting height $h$, then $x$ is contained in $F^{\ast}_{f(x,h)}(G)$ for  a certain function $f$ defined in terms of $h$ and the number of prime divisors of the order of $x$ (counting multiplicities). The authors conjecture in \cite[Conjecture 7.1]{KS} that in this case, $x$ is in fact contained in $F^{\ast}_{h+1}(G)$. The first main result of this paper is a proof of this conjecture.
\begin{Theorem}\label{KSConjecture1} Let $G$ be a finite group, let $x$ be an element of $G$, and fix $h\geq 0$. Then $F^{\ast}_h(G)x$ is contained in $F(G/F^{\ast}_{h}(G))$ if and only if
$\langle E_{G,k}(x)\rangle$ has generalized Fitting height at most $h$ for some positive integer $k$. In particular, if $\langle E_{G,k}(x)\rangle$ has generalized Fitting height $h$ for some positive integer $k$, then $x$ is contained in $F^{\ast}_{h+1}(G)$.\end{Theorem}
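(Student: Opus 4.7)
The proof has two directions, and the principal content lies in the backward direction; the forward direction follows from a careful combination of Baer's theorem applied to the quotient $G/F^*_h(G)$ and an analysis of the stabilised chain $U_k:=\langle E_{G,k}(x)\rangle$. Specifically, if $F^*_h(G)x\in F(G/F^*_h(G))$, then by Baer there is $k$ with $E_{G,k}(x)\subseteq F^*_h(G)$, hence $U_k\leq F^*_h(G)$; taking $k$ large enough that the descending chain of $U_k$ stabilises to a subgroup $U$ satisfying $U=[U,x]$, one uses the ``perfectness under $x$'' to show $U$ is subnormal in $F^*_h(G)\langle x\rangle$, from which the generalized Fitting height bound $\leq h$ follows.

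For the backward direction I would argue by induction on $h$. The base case $h=0$ is precisely Baer's theorem: $\langle E_{G,k}(x)\rangle = 1$ means $x$ is a left Engel element and so $x\in F(G)$. For the inductive step, take a minimal counterexample $G$ (of minimal order) with $x\in G$ and $k$ such that $\langle E_{G,k}(x)\rangle$ has generalized Fitting height $\leq h$ but $F^*_h(G)x\notin F(G/F^*_h(G))$, and choose $k$ large so that $U:=U_k$ is stable and $U=[U,x]$.

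The crucial reduction is to replace $G$ by $\langle U, x\rangle$. If $H=\langle U,x\rangle$ is a proper subgroup of $G$, then the paper's generalization of Flavell's result --- characterising when a subgroup is contained in a unique maximal subgroup --- is invoked to exhibit either a normal maximal subgroup of $G$ enabling a descent via quotient, or a direct contradiction to minimality. One may therefore assume $G=U\langle x\rangle$, in which case $U$ is normal in $G$ (being normalised by $x$ and trivially by itself). Since $U\trianglelefteq G$, we have $F(U)\leq F(G)$ (normal nilpotent) and $E(U)\leq E(G)$ (components of the normal subgroup $U$ are subnormal in $G$, hence are components of $G$), so $F^*(U)\leq F^*(G)$. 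Passing to $\bar G:=G/F^*(U)$, the image $\bar U = U/F^*(U)$ has generalized Fitting height $\leq h-1$ and still satisfies $\bar U=[\bar U,\bar x]$, whence $\langle E_{\bar G,k}(\bar x)\rangle=\bar U$ has generalized Fitting height $\leq h-1$. The inductive hypothesis gives $F^*_{h-1}(\bar G)\bar x\in F(\bar G/F^*_{h-1}(\bar G))$, and using $F^*(U)\leq F^*(G)$ one tracks the generalized Fitting series through the quotient to show that the preimage in $G$ of $F^*_{h-1}(\bar G)$ lies in $F^*_h(G)$; this lifts the inductive conclusion to $F^*_h(G)x\in F(G/F^*_h(G))$, contradicting the choice of counterexample.

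The principal obstacle will be the reduction to $G=U\langle x\rangle$ via the generalized Flavell lemma: verifying its hypotheses in the counterexample setup --- essentially showing that $\langle U,x\rangle$ meets every maximal overgroup in a sufficiently rigid way when $\langle U,x\rangle< G$ --- is the technical heart of the argument and is where the new methods of this paper go beyond the \cite{KS} approach. A secondary delicate point is the lifting step, where one must verify that the preimage of $F^*_{h-1}(\bar G)$ sits inside $F^*_h(G)$; this is a purely structural verification relying on $F^*(U)\leq F^*(G)$ and the standard behavior of layers and Fitting subgroups under normal quotients, but must be executed with care so that the inductive gain of one generalized Fitting level is genuinely preserved.
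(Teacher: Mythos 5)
Your outline correctly identifies two genuine ingredients (Baer's theorem applied in $G/F^{\ast}_h(G)$, and the stabilised chain $U_k=\langle E_{G,k}(x)\rangle$), but the central step of your backward direction does not work, and the one fact that actually drives the proof is never established. The reduction to $G=U\langle x\rangle$ is the problem: Theorem \ref{MainTheorem} does not ``exhibit a normal maximal subgroup of $G$ enabling a descent via quotient'' --- its dichotomy is that either $Y_G(A)=G$ or $A$ lies in a unique maximal subgroup --- and, more seriously, replacing $G$ by the proper subgroup $\langle U,x\rangle$ loses the conclusion, which is a statement about the generalized Fitting series of $G$ itself. Generalized Fitting height is not monotone under passage to arbitrary subgroups or overgroups (e.g.\ $A_4\leq A_5$ has generalized Fitting height $2$ while $A_5$ has generalized Fitting height $1$), so a conclusion about $F^{\ast}_h(\langle U,x\rangle)$ says nothing about $F^{\ast}_h(G)$. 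The only way to transfer height information from $\langle E_{G,k}(x)\rangle$ to $G$ is through its subnormality in $G$, which you assert in the other direction but never prove; that is Corollary \ref{KSCor}(1) in the paper, and it is itself a nontrivial consequence of Theorem \ref{MainTheoremE}.

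The paper's actual argument is shorter and needs no induction on $h$. Let $H$ be the stable term of $G\geq [G,x]\geq [G,x,x]\geq\cdots$; then $H$ is subnormal in $G$ and $H=[H,x]$, so Theorem \ref{MainTheoremE} (this is where the Flavell-type Theorem \ref{MainTheorem} really enters) gives $H=\langle E_{H,k}(x)\rangle\leq\langle E_{G,k}(x)\rangle$ for every $k$. If some $\langle E_{G,k}(x)\rangle$ has generalized Fitting height at most $h$, then so does the subnormal subgroup $H$, hence so does $H^G$, whence $H^G\leq F^{\ast}_h(G)$; since $[G,x,\hdots,x]\leq H\leq F^{\ast}_h(G)$ for long enough commutators, Baer's theorem in $G/F^{\ast}_h(G)$ gives $F^{\ast}_h(G)x\in F(G/F^{\ast}_h(G))$. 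Conversely, if $F^{\ast}_h(G)x\in F(G/F^{\ast}_h(G))$, then the image of $H$ in $G/F^{\ast}_h(G)$ equals its own commutator with the image of $x$ and is therefore trivial by Baer, so $H\leq F^{\ast}_h(G)$ has generalized Fitting height at most $h$, and $\langle E_{G,k}(x)\rangle=H$ for all large $k$ by Corollary \ref{KSCor}(2). To repair your proposal you would need to supply Theorem \ref{MainTheoremE} (equivalently, the containment $H\leq\langle E_{G,k}(x)\rangle$ or the subnormality of the $\langle E_{G,k}(x)\rangle$); once that is in hand, the induction on $h$, the minimal counterexample, and the lifting of $F^{\ast}_{h-1}(\bar G)$ all become unnecessary.
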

In fact, since it is an ``if and only if" statement, Theorem \ref{KSConjecture1} is stronger than \cite[Conjecture 7.1]{KS}. In particular, Baer's classical result can be recovered. 

Thirdly, another length parameter for finite groups is discussed. For a finite group $G$, write $\lambda(G)$ for the \emph{insoluble length} of $G$. That is, $\lambda(G)$ is the minimum number of insoluble factors in a normal series for $G$ each of whose factors is either soluble or a direct product of non-abelian simple groups. In particular, a group is soluble if and only if $\lambda(G)=0$. The group $R_0(G)$ is defined to be the soluble radical of $G$, while $R_i(G)$ is defined to be the largest normal subgroup of $G$ with insoluble length $i$, for $i\geq 1$. The series $1\le R_0(G)\le \hdots\le R_i(G)\le \hdots\le G$ is called the \emph{upper insoluble series} for $G$, and \cite[Theorem 1.3]{KS} shows that if $\langle E_{G,k}(x)\rangle$ has insoluble length $h$, then $x$ is contained in $R_{r(x,h)}(G)$ for a certain function $r$ defined in terms of $h$ and the number of prime divisors of $x$ (again counting multiplicities). Khukhro and Shumyatsky conjecture in \cite[Conjecture 7.2]{KS} that we should have $x\in R_h(G)$ in this case, and our next result proves their conjecture.     
\begin{Theorem}\label{KSConjecture2} Let $G$ be a finite group, let $x$ be an element of $G$, and fix $h\geq 0$. Then $x$ is contained in $R_{h}(G)$ if and only if $\langle E_{G,k}(\alpha)\rangle$ has insoluble length at most $h$ for some positive integer $k$.\end{Theorem}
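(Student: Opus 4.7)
The ``only if'' direction is immediate: since $R_h(G) \trianglelefteq G$ contains $x$, every iterated commutator $[g,_k x]$ lies in $R_h(G)$, so $\langle E_{G,k}(x)\rangle \leqs R_h(G)$, and subgroup monotonicity of insoluble length (intersect a normal series realising $\lambda(R_h(G)) = h$ with the subgroup, and use that subgroups of a direct product of non-abelian simple groups have insoluble length at most $1$) yields $\lambda(\langle E_{G,k}(x)\rangle) \leqs h$.

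For the converse I would argue by induction on $h$ via a minimal counter-example $(G,x)$. Reducing modulo $R_0(G)$ (and using the compatibility $R_h(G)/R_0(G) = R_h(G/R_0(G))$), I may assume $R_0(G) = 1$, so that $L := F^*(G) = E(G)$ is a direct product of non-abelian simple groups and $C_G(L) = 1$. Writing $H := \langle E_{G,k}(x)\rangle$, if $[L,x] = 1$ then $x \in C_G(L) = 1$, so I may assume $[L,x] \neq 1$; then $[L,_\infty x]$ is a non-trivial $\langle x\rangle$-invariant subgroup of $H \cap L$. In the base case $h = 0$ (so $H$ is soluble), a factor-by-factor analysis of the action of $x$ on $L = L_1 \times \cdots \times L_n$ shows that $[L,_\infty x]$ contains a full simple direct factor (or an $\langle x\rangle$-orbit of such), hence is non-soluble, contradicting solubility of $H$. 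For $h \geqs 1$ I pass to $\bar G := G/L$ and apply induction to $(\bar G, \bar x)$: the point is that $H \cap L$ must absorb a full unit of insoluble length, so that the image of $H$ in $\bar G$ has $\lambda \leqs h-1$, and the inductive hypothesis then gives $\bar x \in R_{h-1}(\bar G)$, which pulls back to $x \in R_h(G)$ by the analogous quotient compatibility for $L$.

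The main obstacle is justifying this absorption step, especially in the delicate sub-case where $x$ stabilises each $L_i$ and the induced actions $x|_{L_i}$ make $[L_i,_\infty x]$ possibly a proper (but still non-soluble) subgroup of $L_i$, or where the subdirect-product structure of $H \cap L$ obscures how much insoluble length is actually captured inside $L$. Here I would invoke the generalisation of Flavell's Zipper Lemma announced in the abstract: in a minimal counter-example $x$ lies in a unique maximal subgroup $M$ of $G$, forcing $L \leqs M$ and providing the structural control needed to pin down $H \cap L$. Theorem \ref{KSConjecture1}, already proved in the paper, can further be applied to suitable sections of $G$ to handle the soluble intermediaries separating successive layers $R_i(G)/R_{i-1}(G)$, thereby completing the inductive step.
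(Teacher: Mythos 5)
Both directions of your argument have genuine gaps. In the ``only if'' direction, the lemma you rely on --- that insoluble length is monotone under passage to arbitrary subgroups, because ``subgroups of a direct product of non-abelian simple groups have insoluble length at most $1$'' --- is false: the single non-abelian simple group $A_{25}$ contains a copy of $A_5\wr A_5$, which has insoluble length $2$. Monotonicity does hold for \emph{subnormal} subgroups, and the subnormality of $\langle E_{G,k}(x)\rangle$ is precisely part (1) of Corollary \ref{KSCor} --- but that is itself a nontrivial result proved via Theorem \ref{MainTheoremE}, so this direction is not ``immediate'' (this is also why the statement says ``for some $k$''). In the converse direction, the ``absorption'' step you flag is not merely delicate but rests on a false principle: if $N\trianglelefteq H$ is a non-trivial direct product of non-abelian simple groups, it does \emph{not} follow that $\lambda(H/N)\le\lambda(H)-1$ (take $H=S\times T$ with $S,T$ non-abelian simple and $N=S\times 1$; then $\lambda(H)=1=\lambda(H/N)$). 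Nor can you invoke Theorem \ref{MainTheorem} for the element $x$ itself: its hypothesis $A^G=G$ amounts to $[G,x]=G$, which fails exactly when $x$ lies in a proper normal subgroup --- the situation you are trying to establish.

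The paper's proof is entirely different and avoids these issues by routing everything through Theorem \ref{MainTheoremE}. Let $H$ be the stable term of the series $G\geq[G,x]\geq[G,x,x]\geq\cdots$; then $H=[H,x]$, so Theorem \ref{MainTheoremE} gives $H=\langle E_{H,k}(x)\rangle\le\langle E_{G,k}(x)\rangle$ for every $k$. Since $H$ is subnormal in $G$, we get $\lambda(H^G)=\lambda(H)\le\lambda(\langle E_{G,k}(x)\rangle)\le h$, whence $H^G\le R_h(G)$; modulo $R_h(G)$ the element $x$ is then a left Engel element, and Baer's theorem together with $F(G/R_h(G))=1$ forces $x\in R_h(G)$. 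Conversely, $x\in R_f(G)$ forces $H=[H,x]\le R_f(G)$ and hence $\lambda(H)\le f$, which combined with Corollary \ref{KSCor}(3) produces the required $k$. No induction on $h$, no analysis of the layer, and no control of the maximal subgroups containing $x$ is needed.
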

Again, since Theorem \ref{KSConjecture2} is an ``if and only if" statement, it is stronger than \cite[Conjecture 7.2]{KS}.

Bounds on the insoluble length and the Fitting height of a finite group have proved to be powerful tools in both finite and profinite group theory. In particular, such bounds were crucial in the reduction of the Restricted Burnside Problem to soluble and nilpotent groups due to P. Hall and G. Higman \cite{HH}. J. Wilson also used such bounds when reducing the problem of proving that periodic profinite groups are locally finite to pro $p$-groups \cite{JW}. E. Zelmanov then solved both of these problems in his famous papers \cite{Zel1,Zel2,Zel3}.



 
If $A$ is a subgroup of $H$, let $A^H$ denote the subgroup $H$ generated by all the conjugates of $A$ in $H$.
Theorems \ref{KSConjecture1} and \ref{KSConjecture2} can in fact be deduced from the following general result.

\begin{Theorem}\label{MainTheorem} Let $G$ be a group which satisfies the max condition on subgroups, and the min condition on subnormal subgroups, and let $A$ be a subgroup of $G$ with $A^G=G$. Set $Y:=Y_{G}(A)=\langle    H \lneq G \text{ : }  A \le H\text{ and }H = A^H  \rangle$. Then one of the following holds.
\begin{enumerate}[(1)]
\item $Y=G$.
\item $A$ is contained in a unique maximal subgroup of $G$.\end{enumerate}
\end{Theorem}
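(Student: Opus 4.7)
The plan is to prove the contrapositive: that $Y \lneq G$ forces $A$ to be contained in a unique maximal subgroup of $G$. Throughout, write $\mathcal{F} := \{H \lneq G : A \leq H,\ H = A^H\}$, so that $Y = \langle \mathcal{F}\rangle$, and let $\mathcal{M}(A)$ denote the set of maximal subgroups of $G$ containing $A$.

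I would begin with two structural observations about $\mathcal{F}$. First, $\mathcal{F}$ is closed under joins within any proper subgroup: if $H_1, H_2 \in \mathcal{F}$ and $J := \langle H_1, H_2 \rangle \lneq G$, then $A^J$ contains $A^{H_i} = H_i$ for $i=1,2$, so $A^J = J$ and hence $J \in \mathcal{F}$. In particular, whenever $Y \lneq G$, $Y$ itself lies in $\mathcal{F}$ and is the unique maximum element of $\mathcal{F}$, with $A \leq Y = A^Y \lneq G$. Second, for each $M \in \mathcal{M}(A)$ the descending chain
\[
M \supseteq A^M \supseteq A^{A^M} \supseteq \cdots
\]
consists of terms each normal in the previous one, and the chain conditions force it to stabilise at some $K_M \in \mathcal{F}$ with $K_M \leq M$. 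Iterating $H = A^H \subseteq A^M \subseteq \cdots$ shows that every $H \in \mathcal{F}$ with $H \leq M$ is contained in $K_M$, so $Y = \langle K_M : M \in \mathcal{M}(A)\rangle$; in particular $Y = K_{M^*}$ whenever $M^*$ is a maximal subgroup containing $Y$.

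For the main step, suppose for contradiction that $Y \lneq G$ and that $A$ lies in two distinct maximals $M, M^* \in \mathcal{M}(A)$, with $M^* \supseteq Y$ (such an $M^*$ exists by the max condition). Then $K_M \leq Y \cap M \lneq Y$. Since $A^G = G$, the normal closure $Y^G$ also equals $G$, so some conjugate $Y^g$ is not contained in $M^*$, whence $\langle Y^g, M^* \rangle = G$ by the maximality of $M^*$. The remaining (and most delicate) step is to convert this global equality into a local enlargement of $Y$: following the template of Wielandt's Zipper Lemma and Flavell's generalisation of it, one locates finitely many conjugates $A^{g_1}, \ldots, A^{g_n}$ of $A$ lying in proper overgroups of $Y$, whose join with $Y$ inside an appropriate proper subgroup $Z$ of $G$ satisfies $A \leq Z = A^Z$ and $Y \lneq Z$, contradicting the maximality of $Y$ in $\mathcal{F}$.

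The main obstacle will be precisely this last step: unzipping the relation $\langle Y^g, M^* \rangle = G$ into an explicit proper subgroup $Z$ that lies in $\mathcal{F}$ and strictly dominates $Y$. This is the technical heart of the theorem and requires careful control of how $A$-conjugates distribute among the members of $\mathcal{M}(A)$, and how the iteration $A^{(\cdot)} \supseteq A^{A^{(\cdot)}} \supseteq \cdots$ behaves under conjugation, with the chain conditions invoked to guarantee that the various subnormal descents terminate. The earlier structural lemmas on $\mathcal{F}$ and on the construction of $K_M$, by contrast, are essentially formal consequences of the definition of $\mathcal{F}$ and of the two chain-condition hypotheses.
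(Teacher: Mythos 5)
There is a genuine gap at the decisive step, and you have named it yourself: the ``unzipping'' of $\langle Y^g, M^*\rangle = G$ into a proper subgroup $Z \in \mathcal{F}$ strictly containing $Y$ is never carried out, and it is not clear that this particular plan can be completed --- there is no obvious reason why a join of $Y$ with conjugates of $A$ lying in proper overgroups of $Y$ should remain proper in $G$. Your preliminary observations (closure of $\mathcal{F}$ under joins inside proper subgroups, the stable term $K_M$ of the chain $M \supseteq A^M \supseteq A^{A^M} \supseteq \cdots$, and the fact that every member of $\mathcal{F}$ inside $M$ lies in $K_M$) are correct and coincide with the paper's Lemma on the normal closure descending series, but they are the formal part; the theorem's content lies entirely in the step you leave open.

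The paper closes this gap by a different mechanism, which does not attempt to enlarge $Y$. Assuming $Y \neq G$ and fixing a maximal subgroup $M^*$ containing $Y$, one first observes (as you do) that $F(A,K) \le Y$ for every maximal $K$ containing $A$, so the set $\{F(A,K) : K \in \mathcal{M}(A)\}$ has $Y$ as its unique maximal element. The key move is then to choose $X \in \Omega(A,G)$ (i.e.\ $X = A^X$) \emph{maximal subject to being contained in at least two maximal subgroups of $G$}; such $X$ exists by the max condition since $A$ itself has this property. For any maximal subgroup $L \neq M^*$ containing $X$, one has $X \le F(A,L) \le Y \le M^*$, and $F(A,L) \in \Omega(A,G)$ lies in the two maximal subgroups $L$ and $M^*$, so maximality of $X$ forces $X = F(A,L)$; hence $X$ is subnormal in $L$. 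Thus $X$ is subnormal in all but at most one ($M^*$) of the maximal subgroups containing it, and Flavell's theorem (itself resting on Wielandt's Zipper Lemma, and valid under your chain conditions) places $X$, and therefore $A$, inside a proper normal subgroup of $G$, contradicting $A^G = G$. The contradiction is thus obtained against $A^G = G$ via subnormality, not against the maximality of $Y$ in $\mathcal{F}$; if you want to complete your argument you should redirect it along these lines rather than trying to manufacture the subgroup $Z$.
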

Theorem \ref{MainTheorem} will follow from a generalization of a result of Flavell, which itself generalizes Wielandt's Zipper Lemma.
 

As a by-product of our methods, we also obtain strong results concerning the sets $E_{G,k}(x)$.
\begin{Theorem}\label{MainTheoremE} Let $G$ be a finite group, and let $\alpha$ be an element of $\Aut(G)$ with the property that $[G,\al]=G$. Then $G=\langle E_{G,k}(\alpha)\rangle$ for all positive integers $k$.\end{Theorem}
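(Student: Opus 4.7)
The plan is to apply Theorem \ref{MainTheorem} to $\Gamma:=G\rtimes\langle\alpha\rangle$ with $A:=\langle\alpha\rangle$. The hypothesis $[G,\alpha]=G$ translates to $A^\Gamma=\Gamma$, since $A^\Gamma=A\cdot[A,G]=A\cdot[G,\alpha]$. Let $H_k:=\langle E_{G,k}(\alpha)\rangle$. Each $H_k$ is $\alpha$-invariant, and since $[g,_k\alpha]\in H_k$ with $H_k$ being $\alpha$-invariant, one checks $H_{k+1}\subseteq[H_k,\alpha]\subseteq H_k$; hence the descending chain stabilizes at some $H_\infty$ satisfying $[H_\infty,\alpha]=H_\infty$. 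It therefore suffices to prove $H_\infty=G$, which I do by induction on $|G|$.

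The induction yields two key consequences. (a) For every proper $\alpha$-invariant subgroup $L\leq G$ with $[L,\alpha]=L$, induction applied to $(L,\alpha|_L)$ gives $\langle E_{L,k}(\alpha|_L)\rangle=L$, and since $E_{L,k}(\alpha|_L)\subseteq E_{G,k}(\alpha)$ we get $L\leq H_k$ for all $k$, hence $L\leq H_\infty$. (b) For every proper nontrivial $\alpha$-invariant normal subgroup $N\trianglelefteq G$, induction applied to $G/N$ together with the stabilization of $H_k$ yields $H_\infty N=G$.

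The key structural observation is that proper subgroups $K\leq\Gamma$ with $A\leq K$ and $A^K=K$ correspond precisely to proper $\alpha$-invariant subgroups $L$ of $G$ with $[L,\alpha]=L$, via $K=LA$, since $A^K=A\cdot[A,L]=A\cdot[L,\alpha]$. Thus $Y_\Gamma(A)=Z\cdot A$, where $Z$ is the subgroup of $G$ generated by all such $L$. By Theorem \ref{MainTheorem}, either $Y_\Gamma(A)=\Gamma$---in which case (a) gives $H_\infty\geq Z=G$---or $A$ lies in a unique maximal subgroup $M$ of $\Gamma$. In the latter case, $M\cap G$ is the unique maximal $\alpha$-invariant proper subgroup of $G$, so $H_\infty\leq M\cap G$ and every proper nontrivial $\alpha$-invariant normal $N$ is also contained in $M\cap G$; combining with (b) gives $G=H_\infty N\leq M\cap G<G$, a contradiction unless no such $N$ exists. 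Then $G$ is a minimal normal subgroup of $\Gamma$, so $G\cong T^r$ with $T$ simple and $\alpha$ transitively permuting the factors.

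If $T$ is abelian then $G$ is elementary abelian and $\alpha-1$ is invertible on $G$ (as $[G,\alpha]=G$), so $H_k=(\alpha-1)^kG=G$ directly. The main obstacle is the non-abelian case: one must rule out the unique-maximal conclusion when $G=T^r$ with $T$ non-abelian simple. Concretely, writing $\alpha$ as a cyclic shift of the factors composed with some $\phi\in\Aut(T)$, the twisted diagonal subgroups $\Delta_\sigma:=\{(t,\sigma(t),\ldots,\sigma^{r-1}(t))\mid t\in T\}$ for $\sigma\in\Aut(T)$ satisfying $\sigma^r=\phi^{-1}$ are $\alpha$-invariant and maximal in $G$; exhibiting multiple such $\sigma$ (using that $\Aut(T)$ always admits enough solutions to $\sigma^r=\phi^{-1}$ for $T$ simple non-abelian) produces distinct maximal $\alpha$-invariant proper subgroups, forcing $Y_\Gamma(A)=\Gamma$ and completing the proof.
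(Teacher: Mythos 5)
Your overall strategy---passing to $\Gamma=\langle G,\alpha\rangle$ with $A=\langle\alpha\rangle$, invoking Theorem \ref{MainTheorem}, and disposing of the case $Y_{\Gamma}(A)=\Gamma$ by induction on $|G|$---is exactly the paper's, and that half of your argument (together with the reductions (a), (b) and the abelian minimal normal case) is sound. The genuine gap is in the branch where $A$ lies in a unique maximal subgroup $M$ of $\Gamma$. Your reduction to $G$ being a minimal normal subgroup of $\Gamma$, so $G\cong T^{r}$, is fine, but the non-abelian case is not actually proved. The twisted-diagonal construction is vacuous when $r=1$: there $\Delta_{\sigma}=G$ is not proper, and $G=T$ simple with $\alpha\in\Aut(T)$ is precisely the hardest instance of the theorem, so nothing is ruled out. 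For $r\geq 2$ the assertion that $\sigma^{r}=\phi^{-1}$ ``always admits enough solutions'' in $\Aut(T)$ is unjustified and false in general (already at the level of $\Out(T)$: if the image of $\phi$ has order $3$ in $\Out(T)\cong C_6$ and $r=3$, no $r$-th root exists); and even given two such $\sigma$ you would still need $[\Delta_{\sigma},\alpha]=\Delta_{\sigma}$ (which fails for the untwisted diagonal, on which $\alpha$ acts trivially) and that the resulting subgroups generate $G$. Any repair along these lines would also lean on detailed knowledge of $\Aut(T)$, i.e.\ on the classification, which the paper explicitly avoids for this theorem.

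The missing idea is that the unique-maximal-subgroup case can be killed with no structure theory at all. Let $k$ be minimal with $H_{k}:=\langle E_{G,k}(\alpha)\rangle\neq G$ (note $H_{1}=[G,\alpha]=G$, so $k\geq 2$ and $H_{k-1}=G$). Since $\Gamma=\langle \alpha^{\Gamma}\rangle$, the unique maximal subgroup $M$ containing $\alpha$ is self-normalizing. The subgroup $H_{k}\langle\alpha\rangle$ is proper in $\Gamma$ and contains $\alpha$, hence lies in $M$; in particular $E_{G,k}(\alpha)\subseteq M$. As $H_{k-1}=G\not\leq M$, choose $h=[g,_{k-1}\alpha]\notin M$. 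Then $[h,\alpha]\in E_{G,k}(\alpha)\subseteq M$, so $\alpha^{h}=\alpha[h,\alpha]^{-1}\in M$, i.e.\ $\alpha\in M^{h^{-1}}$, which is a maximal subgroup distinct from $M$ because $h\notin N_{\Gamma}(M)=M$. This contradicts uniqueness and finishes the proof. Replacing your final paragraph with this commutator argument closes the gap and removes all dependence on the structure of $T^{r}$.
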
    
In Theorem \ref{MainTheoremE}, the commutators $[g,_k\alpha]$ are understood to be computed in $\langle G,\alpha\rangle$.

An easy corollary of Theorem \ref{MainTheoremE} is the following:

\begin{Corollary}\label{KSCor} Let $G$ be a finite group, and let $x$ be an element of $G$. Let $H$ be the final term in the subnormal series $G\geq [G,x]\geq [G,x,x]\geq $, and 
let $K$ be the final term in the series $G\geq \langle E_{G,1}(x)\rangle \geq \langle E_{G,2}(x)\rangle \geq\hdots$. 
\begin{enumerate}
\item $\langle E_{G,k}(x) \rangle$ is subnormal in $G$ for all $k$; and
\item  $H = K$. 
\item The minimum of the generalized Fitting heights [respectively insoluble lengths] of the groups $\langle E_{G,k}(x)\rangle$, for $k\in\mathbb{N}$, is the generalized Fitting height [resp. insoluble length] of $K$.  
\end{enumerate} 
\end{Corollary}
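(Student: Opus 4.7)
The plan is to deduce all three parts from Theorem~\ref{MainTheoremE}; the conceptual crux is part~(2), with the remaining parts following from Baer's theorem and the monotonicity of generalised Fitting height and insoluble length under subnormal inclusions.

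I would begin with part~(2). The inclusion $K \le H$ is immediate, since every $[g,_kx]$ lies in $[G,_kx]$, whence $\langle E_{G,k}(x)\rangle \le [G,_kx]$ for each $k$; taking stable values gives $K \le H$. For the reverse inclusion, observe that by stabilisation $H$ is $x$-invariant and satisfies $[H,x] = H$. Applying Theorem~\ref{MainTheoremE} to $H$, with $\alpha$ the automorphism of $H$ induced by conjugation by $x$, yields $H = \langle E_{H,k}(x)\rangle$ for every $k \ge 1$. Since $E_{H,k}(x) \subseteq E_{G,k}(x)$, this forces $H \le \langle E_{G,k}(x)\rangle = L_k$ for every $k$, and hence $H \le K$.

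Part~(3) then follows quickly. On one hand, $L_k = K$ for all $k$ sufficiently large, so the minimum of the generalised Fitting heights (resp.\ insoluble lengths) of the $L_k$ is at most that of $K$. On the other hand, $K = H$ is subnormal in $G$, being the stable term of the subnormal series $G \ge [G,x] \ge [G,x,x] \ge \cdots$; intersecting any subnormal chain from $K$ up to $G$ with $L_k$ yields a subnormal chain from $K$ up to $L_k$, so $K$ is subnormal in $L_k$. Monotonicity of the generalised Fitting height and insoluble length under subnormal inclusions then gives the matching lower bound.

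For part~(1), the key observation is that, since the series $[G,_kx]$ stabilises at $H$, the element $x$ acts nilpotently on $G$ modulo the normal closure $H^G$: we have $[G,_mx] \le H \le H^G$ for every $m \ge k_0$, so $xH^G$ is a left Engel element of $\bar G := G / H^G$. Baer's theorem then places $xH^G \in F(\bar G)$, and because the Fitting subgroup is nilpotent and normal and absorbs commutators with any of its elements, the image of $L_k$ in $\bar G$ lies in $F(\bar G)$ and is therefore subnormal. Lifting yields that $L_k H^G$ is subnormal in $G$, and a short argument---using the $x$-invariance of $L_k$ together with the identity $H = \langle E_{H,k}(x)\rangle$ obtained in part~(2)---then shows that $L_k$ itself is subnormal in $G$. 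I expect this final lifting step to be the main obstacle, since it requires controlling how the $G$-conjugates of $H$ contributed by $H^G$ interact with $L_k$.
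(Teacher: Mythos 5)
Your treatment of parts (2) and (3) is correct and coincides with the paper's: $K\le H$ from $E_{G,k}(x)\subseteq [G,_k x]$, $H\le K$ by applying Theorem \ref{MainTheoremE} to $H=[H,x]$, and part (3) from the subnormality of $K=H$ in each $\langle E_{G,k}(x)\rangle$ together with monotonicity of the two length functions under subnormal inclusion. The problem is part (1), which is the real content of the corollary. Your argument correctly establishes that $xH^G\in F(G/H^G)$ and hence that $L_kH^G$ is subnormal in $G$; but the passage from there to the subnormality of $L_k$ itself --- the step you describe as ``a short argument'' and simultaneously flag as ``the main obstacle'' --- is a genuine gap, not a routine lift. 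Knowing that $M:=L_kH^G$ is subnormal in $G$ and that $M/H^G$ is nilpotent tells you nothing about the position of $L_k$ inside $M$: you would need something like $H^G\le L_k\cdot N$ for some subgroup $N$ normalizing $L_k$, and neither the $x$-invariance of $L_k$ nor the identity $H=\langle E_{H,k}(x)\rangle$ produces this. The difficulty is that $L_k^G$ is generated by the conjugates $[g,_kx]^{g'}=[g^{g'},_k x^{g'}]$, which are iterated commutators against the \emph{conjugates} of $x$ rather than against $x$, so there is no visible control of $L_k^{H^G}$.

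The paper closes exactly this gap by a different induction: take a minimal normal subgroup $N$ of $X=\langle G,x\rangle$, note that $L_kN$ is subnormal by induction in $X/N$, and then prove that $N\le L_k C_N(x)$ (which suffices, since $C_N(x)$ visibly normalizes $L_k$, as conjugation by a centralizing element permutes the generators $[g,_kx]$). This requires two separate arguments: for $N$ elementary abelian, complete reducibility of the $L_k$-action on $N$ plus the observation that an irreducible summand $M$ with $M\cap L_k=1$ satisfies $[M,_kx]=1$ and hence $C_M(x)\neq 1$; for $N$ a product of non-abelian simple groups, an application of Theorem \ref{MainTheoremE} to get $[N,x]\le L_k\cap N$. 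Some argument of this calibre is unavoidable, so as it stands your proof of part (1) is incomplete.
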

We remark that Corollary \ref{KSCor} is a new result, even in the soluble case: the subnormality of the groups $\langle E_{G,k}(\al) \rangle$ was (as far as we know) previously unknown. 

Next, we note that in the special case where the automorphism $\alpha$ in Theorem \ref{MainTheoremE} is an involution, a stronger result is available. First, in this case define the set
\begin{align*} J_G(\alpha):= \{g\in G\text{ : }g\text{ has odd order and } g^{\alpha}=g^{-1}\}.
\end{align*}
We then have the following.
\begin{Theorem}\label{MainTheoremJ} Let $1\neq G$ be a finite group, and let $\alpha\in\Aut(G)$ be an involution. Suppose that $[G,\al]=G$, and write $2^k=\max\{|g|_2\text{ : } g\in G\text{ and } g^\alpha=g^{-1}\}$. Then $J_G(\al)=E_{G,j}(\al)$ for all $j>k$. In particular, $G=\langle J_G(\alpha)\rangle$ by Theorem \ref{MainTheoremE}.\end{Theorem}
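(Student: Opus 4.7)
The plan is a direct computation that hinges on the observation that, because $\alpha$ is an involution, every iterated commutator $[g,{}_j\alpha]$ collapses to a single power of $[g,\alpha]$. First, for any $g\in G$, writing $h=[g,\alpha]=g^{-1}g^{\alpha}$ one has
\[
h^{\alpha} = (g^{\alpha})^{-1}g^{\alpha^{2}} = (g^{\alpha})^{-1}g = h^{-1},
\]
so every element of $E_{G,1}(\alpha)$ is inverted by $\alpha$. Consequently, for any $u\in G$ with $u^{\alpha}=u^{-1}$ we have $[u,\alpha]=u^{-1}u^{\alpha}=u^{-2}$, and an easy induction on $j$ yields the master formula
\[
[g,{}_j\alpha] = [g,\alpha]^{(-2)^{j-1}} \qquad (g\in G,\ j\geq 1).
\]

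Next I would establish $E_{G,j}(\alpha)\subseteq J_G(\alpha)$ for $j>k$. The element $[g,{}_j\alpha]$ is a power of $[g,\alpha]$ and so is automatically inverted by $\alpha$; it remains to show that it has odd order. Writing $|[g,\alpha]|=2^{a}m$ with $m$ odd, the element $[g,\alpha]^{m}\in G$ has order exactly $2^{a}$ and is inverted by $\alpha$, whence $a\leq k$ by the defining property of $2^{k}$. For $j>k$ one then has $j-1\geq a$, and the master formula gives $|[g,{}_j\alpha]|=2^{a}m/\gcd(2^{a}m,2^{j-1})=m$, which is odd; hence $[g,{}_j\alpha]\in J_G(\alpha)$.

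For the reverse inclusion $J_G(\alpha)\subseteq E_{G,j}(\alpha)$, let $x\in J_G(\alpha)$, so $x$ has odd order $m$ with $x^{\alpha}=x^{-1}$. I would look for a preimage inside $\langle x\rangle$, trying $g=x^{s}$: then $[g,\alpha]=x^{-s}(x^{s})^{\alpha}=x^{-2s}$, and the master formula gives $[g,{}_j\alpha]=x^{(-1)^{j}2^{j}s}$. Since $m$ is odd, $2$ is a unit modulo $m$, so one can pick $s\equiv(-2)^{-j}\pmod{m}$ and obtain $[g,{}_j\alpha]=x$. This works for every $j\geq 1$ and, combined with the previous paragraph, gives the equality $J_G(\alpha)=E_{G,j}(\alpha)$ for all $j>k$; the final statement $G=\langle J_G(\alpha)\rangle$ is then immediate from Theorem~\ref{MainTheoremE}.

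The main obstacle is conceptual rather than technical: once one spots the ``collapse'' expressing $[g,{}_j\alpha]$ as a single power of $[g,\alpha]$, the rest reduces to elementary arithmetic modulo $|[g,\alpha]|$, with the hypothesis on $2^{k}$ entering only to bound the $2$-part of $|[g,\alpha]|$. No appeal to the classification or to deep structure theory is needed for this theorem itself; the substantive content lies in Theorem~\ref{MainTheoremE}, which is invoked only at the very end.
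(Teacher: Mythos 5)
Your proof is correct and follows essentially the same route as the paper's: the key identity $[g,{}_j\alpha]=[g,\alpha]^{(-2)^{j-1}}$ is exactly the one used there, with the inclusion $E_{G,j}(\alpha)\subseteq J_G(\alpha)$ for $j>k$ coming from the bound on the $2$-part and the reverse inclusion from exhibiting preimages (which the paper merely asserts but you verify via $g=x^{s}$ with $s\equiv(-2)^{-j}\pmod{m}$). Your write-up is simply a more detailed version of the paper's argument.
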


We also prove the following, which may be of independent interest.
\begin{Theorem}  \label{JBound}
Let $1\neq G$ be a finite group, and let $\alpha\in\Aut(G)$ be an involution. Suppose that $[G,\al]=G$. Then 
there is some function $f$ so that $|G| \le f(|J_G(\al)|)$.
\end{Theorem}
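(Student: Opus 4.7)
The plan is as follows.  First, by Theorem \ref{MainTheoremJ} I have $G=\langle J_G(\alpha)\rangle$.  Setting $n:=|J_G(\alpha)|$, this presents $G$ as an $n$-generated group.  Moreover, for each $g\in J_G(\alpha)$, the cyclic subgroup $\langle g\rangle$ is $\alpha$-invariant (since $g^{\alpha}=g^{-1}\in\langle g\rangle$), with $\alpha$ acting on it by inversion; hence every nontrivial power of $g$ lies in $J_G(\alpha)$, which forces $|g|\le n+1$.  So $G$ is generated by at most $n$ elements, each of order at most $n+1$, and admits the involutive automorphism $\alpha$ with the prescribed $J_G(\alpha)$.

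Next, I would proceed by induction on $|G|$.  A useful observation is that $C_G(\alpha)$ acts on $J_G(\alpha)$ by conjugation, since $(h^{-1}gh)^{\alpha}=(h^{-1}gh)^{-1}$ whenever $h\in C_G(\alpha)$ and $g\in J_G(\alpha)$; and for any $\alpha$-invariant normal subgroup $N$ of $G$ one has $J_N(\alpha)\subseteq J_G(\alpha)$, while $[G/N,\alpha]=G/N$ persists in the quotient.  If $G$ admits a proper nontrivial $\alpha$-invariant normal subgroup $N$ with $[N,\alpha]=N$, then by induction $|N|\le f(|J_N(\alpha)|)\le f(n)$, and it remains to bound $|G/N|$ by controlling $|J_{G/N}(\alpha)|$ in terms of $n$ (the $2'$-parts of $\alpha$-inverted cosets provide the natural lifts).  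Iteratively, this reduces matters to the case where $G$ has no proper nontrivial $\alpha$-invariant normal subgroup satisfying $[N,\alpha]=N$.

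The base case is then handled by analyzing the generalized Fitting subgroup $F^{\ast}(G)$: either $F^{\ast}(G)=G$ and $G$ is nilpotent of odd order (the bound being immediate, essentially $|G|\le n+1$, from the Fitting decomposition and the fact that $\alpha$ must act as inversion on $G/[G,G]$ since $[G,\alpha]=G$ forces $G^{\mathrm{ab}}$ to have odd order), or $G$ is close to a bounded direct product of isomorphic non-abelian simple groups permuted by $\alpha$.  In the latter case CFSG, combined with Brauer--Fowler-type bounds for simple groups admitting an involutive automorphism with boundedly many odd-order inverted elements, should deliver the required bound on $|G|$.

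The main obstacle is twofold.  In the inductive step, one must produce a bound $|J_{G/N}(\alpha)|\le h(n)$ to close the induction; the $2'$-part of a coset representative need not itself remain $\alpha$-inverted in $G$, so the lifting argument requires care.  In the base case, one must verify via CFSG that each finite non-abelian simple $S$ admitting an involutive automorphism $\beta$ with $[S,\beta]=S$ and $|J_S(\beta)|\le n$ has order bounded by a function of $n$ --- a case-by-case inspection across the simple groups.  An alternative route would try to bound the exponent of $G$ in terms of $n$ (exploiting the full chain of commutator sets $E_{G,k}(\alpha)$ provided by Theorem \ref{MainTheoremE}) and then invoke Zelmanov's solution of the Restricted Burnside Problem; however, controlling elements of $G$ of order exceeding $n+1$ in this way does not appear straightforward, which is why I expect the CFSG-based route above to be the cleanest.
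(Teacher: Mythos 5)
Your proposal is a plan rather than a proof, and the two ``obstacles'' you flag at the end are precisely the points where the real work lies; neither is carried out, and the second one is attacked from the wrong direction. The first obstacle (lifting elements of $J_{G/N}(\al)$ back to $J:=J_G(\al)$) is genuinely resolvable and is how the paper runs its induction: given $Ng\in J_{G/N}(\al)$, the element $b:=[\al,g]$ is inverted by $\al$ and lies in the coset $Ng^2$, and a suitable power $b^e$ with $e$ a power of $2$ lands in $Ng\cap J$; hence $|J_{G/N}(\al)|\le |J|$, with strict inequality whenever $J\cap N\neq 1$, which lets one induct on $|J|$ rather than on $|G|$. You identify the difficulty but do not supply this argument, and without it your inductive step does not close. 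The second obstacle is where your route diverges most seriously from what is needed. The key idea you are missing is that since $G=\langle J\rangle$ (Theorem \ref{MainTheoremJ}), the centralizer $C_G(\al)$ acts faithfully by conjugation on $J\setminus\{1\}$ once $Z(G)=1$, so $|C_G(\al)|\le (|J|-1)!$; the Hartley--Meixner theorem then bounds $[G:F(G)]$ in terms of $|C_G(\al)|$ with no appeal to the classification. Your proposed case-by-case CFSG analysis of simple groups is therefore not only unexecuted but unnecessary, and adopting it would sacrifice the paper's explicit claim that Theorem \ref{JBound} is classification-free.

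Beyond that, your reduction leaves the hardest configuration untreated: after bounding $[G:F(G)]$ one must still bound $|F(G)|$, and this is where most of the paper's effort goes (reducing to $[O(Z(G)),\al]=1$, taking a minimal normal subgroup $N/Z(G)$ of $\langle G,\al\rangle/Z(G)$ inside $F(G)/Z(G)$, producing an element of $J_{G/Z(G)}(\al)$ in $N/Z(G)$ separately for $p$ odd and $p=2$, and invoking Schur's theorem together with $|G/[G,G]|\le |J|$ to recover $|G|$ from $|G/Z(G)|$). Your base-case dichotomy (``nilpotent of odd order'' versus ``close to a product of simple groups'') does not cover a group with large Fitting subgroup and nontrivial centre, and your parenthetical claim that the nilpotent odd-order case gives $|G|\le n+1$ is false as stated (an extraspecial $p$-group of order $p^{1+2k}$ with $\al$ inverting the Frattini quotient has $[G,\al]=G$ and $|J|=p^{2k}<|G|$). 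So while several of your preliminary observations are correct (e.g.\ every nontrivial power of an element of $J$ lies in $J$, and $C_G(\al)$ permutes $J$), the proposal as written has genuine gaps at every essential step.
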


Since $H=[H,\al]$ if and only if $H=\langle J_H(\al)\rangle$, we also deduce the following.
\begin{Corollary}  \label{cor:jbound}  Let $1\neq G$ be a finite group, and let $\alpha\in\Aut(G)$ be an involution. Then $|\langle J_G(\al) \rangle|$ is bounded above by
a function of $|J_G(\al)|$.  
\end{Corollary}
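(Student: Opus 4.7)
The plan is to reduce Corollary \ref{cor:jbound} directly to Theorem \ref{JBound} by applying the latter to the subgroup $H := \langle J_G(\alpha) \rangle$. If $J_G(\alpha) \subseteq \{1\}$ then $H = 1$ and the bound is trivial, so I may assume $J_G(\alpha)$ contains a nontrivial element throughout.

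First I would verify that the hypotheses of Theorem \ref{JBound} hold for the pair $(H, \alpha|_H)$. The set $J_G(\alpha)$ is $\alpha$-invariant, since if $g^\alpha = g^{-1}$ with $g$ of odd order, then $g^\alpha$ again has odd order and satisfies $(g^\alpha)^\alpha = (g^\alpha)^{-1}$; hence $\alpha$ stabilises $H$. Any nontrivial $g \in J_G(\alpha)$ has odd order and therefore $g^\alpha = g^{-1} \neq g$, so $\alpha|_H$ is a genuine involution. Moreover, every element of $J_G(\alpha)$ lies in $H$ by construction, whence $J_H(\alpha) = J_G(\alpha)$.

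The decisive ingredient is the equivalence $H = [H,\alpha] \Longleftrightarrow H = \langle J_H(\alpha)\rangle$ flagged in the excerpt. The implication $(\Leftarrow)$ is elementary: for any $g \in J_H(\alpha)$ of odd order $m$, one computes $[g,\alpha] = g^{-1}g^\alpha = g^{-2}$, and since $\gcd(2,m)=1$ the element $g^{-2}$ generates $\langle g\rangle$, so $g \in [H,\alpha]$; running this over generators of $J_H(\alpha)$ gives $H \le [H,\alpha] \le H$. The converse $(\Rightarrow)$ is exactly Theorem \ref{MainTheoremJ} applied to $H$ in place of $G$, which asserts that $[H,\alpha]=H$ forces $H = \langle J_H(\alpha)\rangle$.

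Since $H = \langle J_H(\alpha)\rangle$ by construction, the equivalence yields $[H,\alpha] = H$. Theorem \ref{JBound} now applies to $(H,\alpha|_H)$ and produces a function $f$ with $|H| \le f(|J_H(\alpha)|) = f(|J_G(\alpha)|)$, which is the desired bound. I do not anticipate any genuine obstacle: the corollary is essentially a repackaging of Theorem \ref{JBound}, and the only nontrivial ingredient — deriving the equivalence above — has already been shouldered by Theorem \ref{MainTheoremJ}. The conceptual point is that $\langle J_G(\alpha)\rangle$ is the canonical $\alpha$-invariant subgroup on which $\alpha$ acts ``fully'' in the sense that $[H,\alpha]=H$, placing it exactly within the scope of Theorem \ref{JBound}.
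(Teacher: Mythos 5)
Your proposal is correct and follows essentially the same route as the paper: the paper deduces the corollary from Theorem \ref{JBound} via the observation that $H=[H,\al]$ if and only if $H=\langle J_H(\al)\rangle$, applied to $H=\langle J_G(\al)\rangle$. Your write-up simply supplies the details (the $\alpha$-invariance of $H$, the identity $J_H(\al)=J_G(\al)$, and the computation $[g,\al]=g^{-2}$ showing $\langle J_H(\al)\rangle\le[H,\al]$) that the paper leaves implicit.
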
  

The previous two results do not depend upon the classification of finite simple groups. 
Using the classification   \cite{GR}, we can show:

\begin{Corollary}\label{JCor} Let $1\neq G$ be a finite group, and let $\alpha\in\Aut(G)$ be an involution.  If $G=[G,\al]$, then $[G:F(G)] <   |J_G(\al)|!^4$.
\end{Corollary}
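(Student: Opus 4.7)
Set $n := |J_G(\alpha)|$. By Theorem~\ref{MainTheoremJ}, $G = \langle J_G(\alpha)\rangle$. The plan is to split the bound on $|G/F(G)|$ into a semisimple contribution (controlled via the classification-based estimate of \cite{GR} applied to composition factors) and an outer contribution (controlled combinatorially by the permutation action of $\alpha$ on components).

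I would first pass to $\bar G := G/F(G)$ with induced involution $\bar\alpha$; clearly $\bar G = [\bar G,\bar\alpha]$. A standard two-step coprime lifting through $F(G) = O_2(G)\times O_{2'}(G)$ shows $|J_{\bar G}(\bar\alpha)| \leqs n$: over $O_{2'}(G)$ one applies Schur--Zassenhaus to the dihedral subgroup $\langle\bar g,\bar\alpha\rangle$ of order $2|\bar g|$, and over $O_2(G)$ one lifts the odd-order element $\bar g$ via a coprime-order argument and adjusts inside the $\alpha$-inverted coset. Hence it suffices to bound $|\bar G|$ in terms of $n$.

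The layer $\bar E := E(\bar G) = T_1 \times \cdots \times T_r$ is a direct product of non-abelian finite simple groups, and $\bar\alpha$ permutes the components in orbits of length at most two. The hypothesis $G = [G,\alpha]$ forces $\bar\alpha$ to act non-trivially on each orbit. For a length-two orbit $\{T, T^{\bar\alpha}\}$ with induced isomorphism $\phi: T \to T^{\bar\alpha}$, the $\bar\alpha$-inverted odd-order elements of $T \times T^{\bar\alpha}$ are precisely $\{(t,\phi(t)^{-1}) : t \in T,\; |t|\text{ odd}\}$, contributing at least $|T|_{\odd}$ elements to $J_{\bar G}(\bar\alpha)$ and already bounding $|T|$ in terms of $n$. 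For a length-one orbit $\{T\}$, $\bar\alpha|_T$ is a nontrivial involutory automorphism of $T$ with $T=[T,\bar\alpha|_T]$, and the main result of \cite{GR} supplies an explicit factorial-type bound on $|T|$ in terms of $|J_T(\bar\alpha|_T)|$.

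To assemble the pieces: since each orbit contributes at least one element to $J_{\bar G}(\bar\alpha)$, we have $r \leqs n$ and $\sum_i |J_{T_i}(\bar\alpha|_{T_i})| \leqs n$; combined with the multinomial inequality $\prod_i n_i! \leqs (\sum_i n_i)!$ and the per-component bounds above, this yields $|\bar E| = \prod_i |T_i| \leqs n!^2$. The quotient $\bar G/C_{\bar G}(\bar E)$ embeds in $\Aut(\bar E)$, whose order is at most $|\bar E| \cdot |\Out(\bar E)| \leqs |\bar E| \cdot r! \cdot \prod_i |\Out(T_i)|$; using $r! \leqs n!$ and the CFSG bound that $|\Out(T)|$ is very small compared with $|T|$ for any non-abelian simple $T$, this contributes at most a further factor $n!^2$. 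Multiplying yields $|G/F(G)| = |\bar G| < n!^4$. The main technical obstacle is twofold: translating the precise form of the bound in \cite{GR} (stated as a polynomial-type inequality) into the factorial shape needed here, and controlling the centraliser $C_{\bar G}(\bar E)$, whose contribution must be absorbed into the same structural analysis so as not to push the exponent past $4$; the component analysis and the coprime-lifting reduction through $F(G)$ are otherwise standard.
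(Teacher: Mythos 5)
Your reduction to $\bar G=G/F(G)$ and the surjectivity of $J_G(\al)\to J_{\bar G}(\bar\al)$ are fine (the paper proves the latter more directly: for $Ng\in J_{G/N}(\al)$ the element $b=(g^{\al})^{-1}g$ lies in $Ng^{2}$ and is inverted by $\al$, and a suitable power of $b$ lands in $Ng\cap J$). But the argument then collapses at exactly the point you defer as a ``technical obstacle'': you bound $|\bar E|$ and $|\bar G/C_{\bar G}(\bar E)|$ but never $|C_{\bar G}(\bar E)|$, and this is not a loose end that can be absorbed -- it is the heart of the problem. Since $[F(\bar G),E(\bar G)]=1$, the subgroup $C_{\bar G}(\bar E)$ contains all of $F(\bar G)=F(G/F(G))$, and essentially the whole soluble radical of $\bar G$ sits inside it. When $\bar G$ is soluble (e.g.\ $G$ of Fitting height $2$), your component analysis is vacuous: $\bar E=1$, $C_{\bar G}(\bar E)=\bar G$, and nothing at all is bounded. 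Two secondary problems: $E(\bar G)$ need not be a direct product of simple groups (since $F(\bar G)$ need not vanish, the components are only quasisimple and form a central product); and the main theorems of \cite{GR} bound $[G:F(G)]$ in terms of $|C_G(t)|$ for an involution $t$, not in terms of the number of inverted odd-order elements, so the per-component ``factorial-type bound on $|T|$ in terms of $|J_T|$'' you invoke is not what that reference supplies.

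The paper's proof runs along entirely different lines and sidesteps all of this by bounding the centralizer first. After reducing to $Z(G)=1$ (Lemma \ref{FLemma} splits off a central direct factor of prime order), Theorem \ref{MainTheoremJ} gives $G=\langle J_G(\al)\rangle$, so $C_G(\al)$ acts faithfully on $J^{\#}=J_G(\al)\setminus\{1\}$; moreover $\al$ acts on $J^{\#}$ as a fixed-point-free involution, whence $|C_G(\al)|\le 2^{m/2-1}(m/2)!$ with $m=|J^{\#}|$. One then quotes Hartley--Meixner \cite{HM} (classification-free) or Guralnick--Robinson \cite{GR} (for the explicit exponent $4$) to bound $[G:F(G)]$ in terms of $|C_G(\al)|$. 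Those theorems are precisely where the difficulty of the soluble part of $G/F(G)$ is handled; your sketch attempts to reprove that content by component analysis and omits the hard case. To repair your approach you would need an independent bound on the soluble radical of $\bar G$ in terms of $n$, which amounts to redoing Theorem \ref{JBound}; the short route is to bound $|C_G(\al)|$ and cite \cite{GR}.
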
  

The proof of Corollary \ref{JCor} in the case that $Z(G)=1$ follows from Theorem \ref{MainTheoremJ}.   That implies
that $C_G(\al)/Z(G)$ acts faithfully on $J^{\#} : =J_G(\al) \setminus{\{1\}}$.  Moreover $\al$ acts as a fixed point free involution
on $J^{\#}$,  whence if $m= |J^{\#}|$,  $|C_G(\al)| \le  2^{m/2-1}(m/2)!$ and then
using the main theorem in \cite{HM}, we get a bound for $[G:F(G)]$ (without the classification of finite simple groups) and the 
specific bound using \cite{GR} which depends upon the classification.   If $Z(G) \ne 1$, one
needs to modify the proof slightly (see Lemma \ref{FLemma}).

In fact, the bound in Corollary \ref{JCor}  is not so far from best possible.  Let $A = S_n$, $G = A_n$ and $\al$ a transposition in $A$.
Then $|J_G(\al)| = 2n-3$,  $|C_G(\al)|  = (n-2)!$ and $|G| > ((|J_G(\al)|-1)/2)!$.

Throughout, we write $Z(G)$, $[G,G]$, and $O(G)$ to denote the centre, derived subgroup, and largest odd order normal subgroup of $G$, respectively. For an element $g$ of $G$, we will denote the order of $g$ by $|g|$. 

Finally, we remark that the earlier bounds \cite[Theorems 1.2 and 1.3]{KS} on the generalized Fitting height and insoluble length of a finite group depend on the classification of finite simple groups, through the use of solubility of the outer automorphisms of a finite simple group (Schreier's conjecture). All results in this paper, apart from Corollary \ref{JCor} are obtained without the use of the classification of finite simple groups.

\textbf{Acknowledgement:} We would like to thank Professor Pavel Shumyatsky for introducing us to these problems, and for useful discussions. We would also like to thank the anonymous referee for their time and effort in reviewing the paper.

\section{Proofs of Theorems \ref{KSConjecture1}, \ref{KSConjecture2}, \ref{MainTheoremE} and \ref{MainTheoremJ}; and Corollary \ref{KSCor}}
We begin this section by showing that Theorems \ref{KSConjecture1}, \ref{KSConjecture2}, \ref{MainTheoremE} and \ref{MainTheoremJ}, together with Corollary \ref{KSCor}, follow from Theorem \ref{MainTheorem}.

\begin{Proposition}\label{RedProp2} Assume that Theorem \ref{MainTheorem} holds.  Then Theorems \ref{KSConjecture1}, \ref{KSConjecture2}, \ref{MainTheoremE} and \ref{MainTheoremJ}, together with Corollary \ref{KSCor}, hold.
   \end{Proposition}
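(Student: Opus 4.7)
The plan is to chain the deductions: first derive Theorem \ref{MainTheoremE} from Theorem \ref{MainTheorem}, then obtain Theorem \ref{MainTheoremJ} via explicit commutator identities together with \ref{MainTheoremE}, then Corollary \ref{KSCor} from \ref{MainTheoremE} and Baer's theorem, and finally the two Khukhro--Shumyatsky conjectures as immediate consequences.

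For Theorem \ref{MainTheoremE} I would induct on $|G|$. Set $\tilde{G} = G \rtimes \langle\alpha\rangle$ and $A = \langle\alpha\rangle$; since $A^{\tilde{G}} = A\cdot [G,\alpha]$, the hypothesis $[G,\alpha]=G$ is exactly $A^{\tilde{G}} = \tilde{G}$. Write $B_k := \langle E_{G,k}(\alpha)\rangle$, which is $\alpha$-invariant, so $L_k := AB_k$ is a subgroup. The chain $B_1 \geq B_2 \geq \cdots$ stabilises at some $B_\infty$, and a direct commutator calculation gives $[B_\infty,\alpha] = B_\infty$, so $L_\infty := AB_\infty$ satisfies $L_\infty = A^{L_\infty}$. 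Applying Theorem \ref{MainTheorem} to $(\tilde{G}, A)$: in the alternative $Y_{\tilde{G}}(A) = \tilde{G}$, the group $\tilde{G}$ is generated by proper subgroups $H$ with $A\leq H$ and $H = A^H$, and each such $H$ decomposes as $H = A\cdot [H,\alpha]$ with $[H,\alpha]$ a proper $\alpha$-invariant subgroup of $G$ satisfying $[[H,\alpha],\alpha] = [H,\alpha]$; by the inductive hypothesis applied to $([H,\alpha], \alpha|_{[H,\alpha]})$ we obtain $[H,\alpha] = \langle E_{[H,\alpha],k}(\alpha)\rangle \leq B_k$, and since $B_k$ is $\alpha$-invariant and $\tilde{G}$ is generated by these $H$'s, every element of $G$ is a product of $\alpha$-conjugates of elements of various $[H,\alpha]$'s, forcing $G \leq B_k$. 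In the other alternative, $A$ is contained in a unique maximal subgroup $M$ of $\tilde{G}$, and the key observation is that $N := M\cap G$ is then the unique maximal $\alpha$-invariant proper subgroup of $G$, since any $\alpha$-invariant proper $K\leq G$ gives $AK$ a proper subgroup of $\tilde{G}$ containing $A$, hence $AK\leq M$ by uniqueness and $K\leq N$. Combining this with $[G,\alpha]=G$, and reducing by induction on $|G|$ applied to quotients by $\alpha$-invariant normal subgroups of $G$ contained in $N$ (including minimal normal $\alpha$-invariant subgroups), one derives a contradiction with $B_k < G$. This Case 2 analysis is the main obstacle of the argument.

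Theorem \ref{MainTheoremJ} reduces to a commutator calculation: from $\alpha^2 = 1$ one verifies $[g,\alpha]^\alpha = [g,\alpha]^{-1}$ for every $g \in G$, so each element of $E_{G,1}(\alpha)$ is inverted by $\alpha$, and iterating $[h,\alpha] = h^{-2}$ for $h^\alpha = h^{-1}$ gives $[g,_j\alpha] = [g,\alpha]^{(-2)^{j-1}}$ for all $j\geq 1$. For $j > k$ this exponent annihilates the $2$-part of $[g,\alpha]$, so $[g,_j\alpha]\in J_G(\alpha)$; conversely any $h\in J_G(\alpha)$ is realised as $[h^c,_j\alpha]$ with $c$ a multiplicative inverse of $(-2)^{j}$ modulo $|h|$, which exists since $|h|$ is odd. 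Hence $J_G(\alpha) = E_{G,j}(\alpha)$ for $j > k$, and $G = \langle J_G(\alpha)\rangle$ follows from Theorem \ref{MainTheoremE}. For Corollary \ref{KSCor}, apply Theorem \ref{MainTheoremE} to the stable term $G_\infty$ of the chain $G \geq [G,x] \geq [G,x,x] \geq \cdots$, which is normal in $G$ and satisfies $[G_\infty,x] = G_\infty$: this yields $G_\infty = \langle E_{G_\infty,k}(x)\rangle \leq B_k$ for every $k$, while $B_k \leq [G,_k x]$ gives the reverse inclusion in the limit, proving part (2). For part (1), in $\bar{G} := G/G_\infty$ the element $\bar{x}$ is a bounded left Engel element, so $\bar{x}\in F(\bar{G})$ by Baer's theorem; hence $\bar{B}_k$ is contained in the normal closure $\langle\bar{x}^{\bar{G}}\rangle \leq F(\bar{G})$, a subgroup of the nilpotent group $F(\bar{G})$. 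In a nilpotent group every subgroup is subnormal, so $\bar{B}_k$ is subnormal in $\bar{G}$, and since $G_\infty$ is normal in $G$, $B_k$ is subnormal in $G$. Part (3) then follows from the monotonicity of generalised Fitting height and insoluble length along subnormal inclusions, together with $B_k = G_\infty$ for large $k$.

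The Khukhro--Shumyatsky conjectures are then immediate. For the forward direction of Theorem \ref{KSConjecture1} (resp.\ Theorem \ref{KSConjecture2}): if $F^*_h(G)x\in F(G/F^*_h(G))$ (resp.\ $x\in R_h(G)$), Baer's theorem yields a $k$ with $E_{G,k}(x)\subseteq F^*_h(G)$ (resp.\ $R_h(G)$), so $B_k$ has generalised Fitting height $\leq h$ (resp.\ insoluble length $\leq h$). Conversely, if $B_k$ has the appropriate height/length, Corollary \ref{KSCor}(1) makes it subnormal in $G$, and a routine induction on $h$ shows that subnormal subgroups of generalised Fitting height $\leq h$ (resp.\ insoluble length $\leq h$) lie in $F^*_h(G)$ (resp.\ $R_h(G)$); hence $\bar{x}$ is Engel in the relevant quotient, and Baer's theorem gives $\bar{x}\in F(G/F^*_h(G))$, which is the conclusion of Theorem \ref{KSConjecture1}, or $\bar{x}\in F(G/R_h(G)) \leq R_0(G/R_h(G)) = 1$, giving $x\in R_h(G)$ for Theorem \ref{KSConjecture2} (since by maximality of $R_h$ the quotient $G/R_h(G)$ has trivial soluble radical).
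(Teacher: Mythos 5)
Your reduction has the right architecture (Theorem \ref{MainTheoremE} first, then everything else), and your deductions of Theorem \ref{MainTheoremJ}, of the two conjectures from \ref{MainTheoremE} plus Baer's theorem, and of Corollary \ref{KSCor}(2) all track the paper closely. But there is a genuine gap exactly where you flag one: the second alternative of Theorem \ref{MainTheorem} in the proof of Theorem \ref{MainTheoremE}. Knowing that $M\cap G$ is the unique maximal $\alpha$-invariant proper subgroup of $G$ and ``reducing by induction on quotients'' does not produce a contradiction --- indeed $B_k=\langle E_{G,k}(\alpha)\rangle$ is itself a proper $\alpha$-invariant subgroup, so it sits inside $M\cap G$ and nothing forces it upward. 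The missing idea is to take $k$ \emph{minimal} with $B_k\neq G$. Then $B_{k-1}=G\not\leq M$, so there is an element $h=[g,{}_{k-1}\alpha]\in X\setminus M$ (where $X=\langle G,\alpha\rangle$). Since $B_k\langle\alpha\rangle$ is a proper subgroup of $X$ containing $\alpha$, it lies in $M$ by uniqueness, so $[h,\alpha]\in M$ and hence $\alpha^h=\alpha[h,\alpha]^{-1}\in M$. Thus $\alpha\in M^{h^{-1}}$, and uniqueness of $M$ forces $M^{h^{-1}}=M$, i.e.\ $h\in N_X(M)$; but $M$ is self-normalizing because $X$ is generated by conjugates of $\alpha$, so $h\in M$, a contradiction. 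Without this (or an equivalent) argument the whole proposition is unproved, since every other statement is routed through Theorem \ref{MainTheoremE}.

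A secondary issue: your proof of Corollary \ref{KSCor}(1) passes to $G/G_\infty$ and so needs $G_\infty$ to be \emph{normal} in $G$, but the terms $[G,x,x],\dots$ are in general only subnormal (each normal in the previous), and the paper is careful to speak only of $H^G$ for this reason. If one repairs that (e.g.\ by first proving subnormality another way, or by justifying normality of the stable term), your route via $\bar x\in F(\bar G)$ and subnormality of subgroups of nilpotent groups is an attractive alternative to the paper's argument, which instead inducts on a minimal normal subgroup $N$ of $\langle G,x\rangle$ and shows in each case that $N\leq C_N(x)R$ normalizes $R=\langle E_{G,k}(x)\rangle$. As written, though, both the main step and this auxiliary step are incomplete.
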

   \begin{proof} We first prove that Theorem \ref{MainTheoremE} holds. So assume that $G$ is a finite group, $\alpha$ is an automorphism of $G$, and $[G,\alpha]=G$. Suppose that $k$ is minimal with the property that $\langle E_{G,k}(\alpha)\rangle\neq G$. Note that if $H$ is a subgroup of $G$, then $[H,\alpha]=H$ if and only if $\langle\alpha^H\rangle=\langle H,\alpha\rangle$. Set $X:=\langle G,\alpha\rangle$. Recall that $Y:=Y_{X}(\alpha)=\langle    H \lneq X \text{ : }  \alpha\in H = \alpha^H  \rangle$. If $Y:=Y_X(\alpha)=X$, then $X=\langle \alpha^H\text{ : }H\lneq G\rangle$ and the result follows by induction on the order of $G$. Thus, by Theorem \ref{MainTheorem} we may assume that $\alpha$ is contained in a unique maximal subgroup $K$ of $X$.   Since $\langle G, \alpha \rangle$ is generated by conjugates of $\alpha$,  the subgroup $K$ is self normalizing. 
   
Now, by the minimality of $k$ we may choose an element $h:=[g,_{k-1}\alpha]$ of $X\backslash K$. Then $[h,\alpha]\in K$, so $\alpha^h\in K$. Hence $\alpha\in K^{h^{-1}}\neq K$, contrary to assumption. This completes the proof of Theorem \ref{MainTheoremE}.

We next prove Corollary \ref{KSCor}.   Let $N$ be a minimal normal subgroup of $X$.   Let $R = \langle E_{G,k}(\alpha)\rangle$.  We need
to show that $R$ is subnormal in $X$.    By induction $RN$ is subnormal in $X$
and so it suffices to prove that $N$ normalizes $R$ or $RN=RC_N(\alpha)$ (since $C_N(\alpha)$ normalizes $R$).  

First suppose that $N$ is elementary abelian.   Then since $N$ acts trivially on $N$,  $R$ acts completely reducibly 
on $N$.   Let $M$ be an irreducible $R$-submodule of $N$ with $M \cap R = 1$  (or equivalently
$M$ is not contained in $R$).  Then 
$M_0 := [M,\alpha, \ldots, \alpha]$ (the $k$-fold commutator) is contained in $M \cap R =1$.   Thus 
$C_M(\alpha) \ne 1$ and so $C_M(\alpha)R \cap M \ne 1$.   Since $M$ is an irreducible $R$-module,
this implies that $M \le C_M(\alpha)R$.   Since this is true for any irreducible submodule,  $N \le C_N(\alpha)R$.

So we may assume that $N$ is a direct product of $t \ge 1$ non-abelian simple groups.    By  Theorem \ref{MainTheoremE}, 
$R \cap N$ contains $[N, \alpha]$, whence  $N=(N \cap R)C_N(\alpha)$   and as above, $N$ normalizes $H$.   
This proves (1) of Corollary \ref{KSCor}. Since $K$ is subnormal in $E_{G,k}(\alpha)$ for all $k\in\mathbb{N}$, part (3) also follows.  Moreover, since $H=[H,\alpha]$, Theorem \ref{MainTheoremE} implies
that $H = \langle E_{H,m}(\alpha) \rangle \le  \langle E_{G,m}(\alpha) \rangle$ for all $m$, whence $H \le K$.  
Obviously $K \le H$, whence $H=K$.

We now prove Theorem \ref{MainTheoremJ}. So assume that $\alpha\in\Aut(G)$ is an involution. Write $2^k=\max\{|g|_2\text{ : } g\in G\text{ and } g^\alpha=g^{-1}\}$, and note that $[g,_j\alpha]=[g,\alpha]^{(-2)^{j-1}}$ whenever $g\in G$. Since $[g,\al]$ is inverted by $\alpha$ for all $g\in G$, we deduce that $E_{G,j}(\al)\subseteq J_G(\al)$ whenever $j>k$. Thus, since $J_G(\al)$ is contained in  $E_{G,j}(\al)$ for all $j\geq 1$, we have $J_G(\alpha)=E_{G,j}(\alpha)$ whenever $j > k$.
 
Next, we prove that Theorems \ref{KSConjecture1} and \ref{KSConjecture2} follow from Theorem \ref{MainTheoremE}.

Indeed, suppose that $G$ is a finite group, $\alpha$ is an element of $G$, and $k$ is a positive integer. Let $H$ be the stable term in the subnormal series $G\geq [G,\al]\geq [G,\al,\al]\geq\hdots$. Then $H=[H,\al]$, so $H=\langle E_{H,k}(\al)\rangle\le \langle E_{G,k}(\al)\rangle$ by Theorem \ref{MainTheoremE}. Now, an easy exercise shows that the generalized Fitting height [respectively insoluble length] of $H^G$ coincides with the generalized Fitting height [resp. insoluble length] of $H$, since $H$ is subnormal in $G$. Moreover, the generalized Fitting height [resp. insoluble length] of $H$ is at most the generalized Fitting height [resp. insoluble length] of $\langle E_{G,k}(\al)\rangle$. Let $h$ be the generalized Fitting height [resp. insoluble length] of $H$. Then $H^G$, being a normal subgroup of $G$ with generalized Fitting height [resp. insoluble length] $h$, is contained in $F^{\ast}_h(G)$ [resp. $R_h(G)$]. Hence, $F^{\ast}_h(G)\alpha$ [resp. $R_h(G)\al$] is contained in $F(G/F^{\ast}_h(G))\le F^{\ast}_{h+1}(G)/F^{\ast}_h(G)$ [resp. $F(G/R_h(G))=1$] by Baer's Theorem.

On the other hand, suppose that $F^{\ast}_f(G)\al$ is contained in $F(G/F^{\ast}_f(G))$ [resp. $R_f(G)$], where $f\geq 0$. Then $\al$ centralizes $HF^{\ast}_f(G)/F^{\ast}_f(G)$ [resp. $HR_f(G)/R_f(G)$], so $[H,\al]=H$ implies that $H$, and hence $H^G$, is contained in $F^{\ast}_f(G)$ [resp. $R_f(G)$]. Thus, $h\le f$, as required.  
\end{proof}

We will now begin preparations for the proof of Theorem \ref{MainTheorem}. First,
for a proper subgroup $A$ of a group $H$, we define the \emph{normal closure descending series} for $A$ in $H$ as follows. Let $H_0=H$ and let $H_{i+1} = \langle A^{H_i} \rangle$. We define $F(A,H):=\cap_{i\geq 0} H_i$.

We first note the following trivial facts:
\begin{Lemma}\label{TrLemma}  Let $H$, $A$, $H_i$ and $F(A,H)$ be as defined above, and assume that $H$ satisfies the min condition on subnormal subgroups. Then
\begin{enumerate}[(i)]
\item $H_{i+1}$ is  normal in $H_i$ and $H_j$ is subnormal in $H$ for all $j$. 
\item  $ \langle A^{F(A,H)}  \rangle = F(A,H)$.
\item   If $A \le L \le H$ and $\langle A^L \rangle = L$, then $L \le F(A,H)$. 
\end{enumerate}
\end{Lemma}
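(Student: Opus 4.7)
The plan is to handle the three parts in sequence, with the crucial preliminary observation that $A \le H_i$ for every $i$. This follows by induction on $i$: we have $A \le H = H_0$ by hypothesis, and if $A \le H_i$ then $A = A^{1}$ lies in the conjugation set $A^{H_i}$, so $A \le \langle A^{H_i} \rangle = H_{i+1}$. Keeping this in mind, each part should then fall out with only minor bookkeeping.

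For part (i), I would observe that $H_{i+1} = \langle A^{H_i} \rangle$ is generated by the collection of subgroups $\{A^h : h \in H_i\}$, a set on which $H_i$ acts by conjugation (it just permutes the conjugates of $A$). Hence $H_{i+1}$ is normalized by $H_i$, i.e.\ $H_{i+1} \trianglelefteq H_i$. Subnormality of $H_j$ in $H$ follows by a straightforward induction on $j$: concatenating the relation $H_{j} \trianglelefteq H_{j-1}$ with an already-obtained subnormal series from $H_{j-1}$ up to $H$ produces a subnormal series from $H_j$ up to $H$.

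For part (ii), I would invoke the min condition on subnormal subgroups together with (i): since every $H_i$ is subnormal in $H$ and $H_0 \ge H_1 \ge H_2 \ge \cdots$ is a descending chain of subnormal subgroups of $H$, there exists $N$ with $H_N = H_{N+1} = H_{N+2} = \cdots$. Then $F(A,H) = \bigcap_{i\ge 0} H_i = H_N$, and from the recursion $H_{N+1} = \langle A^{H_N}\rangle$ we obtain
\[
F(A,H) \;=\; H_{N+1} \;=\; \langle A^{H_N} \rangle \;=\; \langle A^{F(A,H)} \rangle.
\]

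For part (iii), I would show by induction on $i$ that $L \le H_i$. The base case $L \le H = H_0$ is immediate. For the inductive step, if $L \le H_i$, then $A^L \subseteq A^{H_i}$, so
\[
L \;=\; \langle A^L \rangle \;\le\; \langle A^{H_i} \rangle \;=\; H_{i+1}.
\]
Intersecting over all $i$ gives $L \le F(A,H)$.

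No step presents a genuine obstacle; the only non-formal ingredient is the min condition on subnormal subgroups, which is used precisely once, in part (ii), to guarantee stabilization of the descending chain. Everything else is a routine application of the definitions together with the fact that $A$ stays inside every $H_i$.
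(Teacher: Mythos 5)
Your proof is correct and follows essentially the same route as the paper's: the paper dismisses (i) and (ii) as immediate from the definition (with the min condition forcing the descending chain of subnormal subgroups to stabilize at some $H_m=F(A,H)$), and proves (iii) by the same induction showing $L\le H_i$ for all $i$. Your write-up merely spells out the details the paper leaves implicit.
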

\begin{proof} Parts (i) and (ii) follow immediately from the definition of the series $H=H_0\geq H_1\geq\hdots $, since $F(A,H)=H_m$ is a member of the series in this case. So assume that $A \le L \le H$ and that $\langle A^L \rangle = L$. Then $H_0=LH_0$, so $H_1=\langle A^{LH_0}\rangle\geq \langle A^{L}\rangle=L$. Extending this argument inductively yields $L\le H_i$ for all $i$. Hence, $L\le H_m=F(A,H)$.
\end{proof}

Next, we prove a generalization of a result of Flavell, which states that if $G$ is finite and $A$ is a proper subgroup of $G$ which is 
contained in at least two maximal subgroups and is subnormal in all but at most one of the maximal subgroups in which it is contained, then $A$ is contained in a proper normal subgroup of $G$. Write $\mathcal{M}(A)$ for the set of maximal subgroups of $G$ containing $A$. We remark that Wielandt's Zipper Lemma \cite[Theorem 2.9]{Is}, which is usually stated for finite groups, holds in the more general case where $G$ satisfies the max condition for subgroups, while his Join Lemma \cite[Theorem 2.5]{Is}, holds whenever $G$ satisfies the max condition for subnormal subgroups. Hence, from \cite[proof of Main Theorem]{Fl} we can see that Flavell's Theorem holds in the more general case where $G$ satisfies the max condition on subgroups.

Our generalization can now be given as follows.
\begin{Lemma}\label{FlavellGen} Let $G$ be a group satisfying the max condition on subgroups, and the min condition on subnormal subgroups. Let $A$ be a proper subgroup of $G$ satisfying the following:
\begin{enumerate}[(a)]
\item $A$ is contained in at least two maximal subgroups of $G$.
\item The set $\{F(A,H)\text{ : }H\in \mathcal{M}(A)\}$ has a unique maximal element.
\end{enumerate}
Then $A$ is contained in a proper normal subgroup of $G$.
\end{Lemma}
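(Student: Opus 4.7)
My plan is to argue by contradiction: suppose that $A$ lies in no proper normal subgroup of $G$, which is equivalent to the condition $A^G = G$. Let $M \in \mathcal{M}(A)$ be chosen so that $L := F(A,M)$ is the unique maximal element of $\mathcal{L} := \{F(A,H) : H \in \mathcal{M}(A)\}$. Because the max condition on subgroups guarantees that every element of $\mathcal{L}$ is dominated by some maximal element of $\mathcal{L}$, the uniqueness of $L$ forces $F(A,H) \le L$ for every $H \in \mathcal{M}(A)$.

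The key preliminary observation is the following: if some $H \in \mathcal{M}(A)$ happens to contain $L$, then Lemma \ref{TrLemma}(iii) yields $L \le F(A,H)$, which combined with $F(A,H) \le L$ gives $F(A,H) = L$. Moreover, by Lemma \ref{TrLemma}(i), together with the min condition on subnormal subgroups which truncates the defining descending chain after finitely many steps, $L$ is subnormal in $H$.

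I would then split the argument into two cases according to the number of maximal subgroups of $G$ that contain $L$. In the first case, at least two distinct maximals of $G$ contain $L$; since any two distinct maximals generate $G$ by maximality, and $L$ is subnormal in each maximal containing it (by the key observation), Wielandt's Join Lemma, valid under the max condition on subnormal subgroups, implies that $L$ is itself subnormal in $G$. The resulting finite subnormal series $L = L_0 \lhd L_1 \lhd \cdots \lhd L_k = G$ then exhibits $L_{k-1}$ as a proper normal subgroup of $G$ containing $A$, contradicting $A^G = G$.

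The second case---$L$ lies in a unique maximal subgroup $M$ of $G$---is the principal obstacle, and I expect it to be the hard part of the proof. Since $A$ lies in at least two maximals of $G$ by hypothesis, I would pick $H \in \mathcal{M}(A) \setminus \{M\}$; then $L \not\le H$, so $F(A,H) \lneq L$ is strict and $\langle L, H\rangle = G$ by the maximality of $H$. My plan in this case is to exploit this configuration to construct a proper $A$-containing subnormal subgroup of $G$, either by producing (via iterated normal closures inside $L \cap H$) a subgroup simultaneously subnormal in $H$ and in a suitable overgroup whose join with $H$ is $G$, so that the Join Lemma again applies, or by an inductive argument based on the max condition applied to the smaller ambient group $L$, noting that $\langle A^L\rangle = L$ mirrors the global contradictory assumption inside $L$. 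The hardest step will be extracting, from the two ``uniqueness'' hypotheses (uniqueness of $L$ as a maximal element of $\mathcal{L}$, and uniqueness of $M$ as a maximal of $G$ containing $L$), enough subnormality in $G$ to force a proper normal subgroup containing $A$.
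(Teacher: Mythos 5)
Your preliminary reduction and your Case 1 are essentially sound, though the lemma you cite there is the wrong one: Wielandt's Join Lemma (the join of two subnormal subgroups of $G$ is subnormal in $G$) does not assert that a subgroup subnormal in two maximal subgroups which generate $G$ is subnormal in $G$. What actually rescues Case 1 is the Zipper Lemma: every proper subgroup of $G$ containing $L$ lies in some maximal subgroup, which automatically belongs to $\mathcal{M}(A)$ (as $A\le L$) and hence, by your key observation, contains $L$ subnormally; since subnormality passes to intermediate subgroups, $L$ is subnormal in every proper overgroup, and if $L$ were not subnormal in $G$ the Zipper Lemma would force a unique maximal overgroup, contradicting the Case 1 hypothesis. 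The penultimate term of a subnormal series for $L$ in $G$ is then the required proper normal subgroup containing $A$.

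The genuine gap is Case 2, which you explicitly leave as a plan rather than an argument --- and it is precisely where the content of the lemma lives. Neither of your suggested routes (iterated normal closures inside $L\cap H$, or induction inside $L$) is developed to the point where it produces a proper normal subgroup of $G$ containing $A$, and it is not clear either would succeed as stated. The paper sidesteps the difficulty with a different choice of subgroup: instead of working with $L=F(A,M)$ itself, it takes $X$ with $\langle A^X\rangle=X$ \emph{maximal} subject to being contained in at least two maximal subgroups of $G$ (a nonempty family, since $A$ belongs to it). For any maximal subgroup $H\neq M$ containing $X$, Lemma \ref{TrLemma}(iii) gives $X\le F(A,H)\le Y\le M$, and since $F(A,H)$ satisfies $\langle A^{F(A,H)}\rangle=F(A,H)$ and lies in the two distinct maximals $H$ and $M$, the maximality of $X$ forces $X=F(A,H)$; hence $X$ is subnormal in every maximal subgroup containing it except possibly $M$. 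The proof is then closed by citing Flavell's theorem \cite{Fl} as a black box: a proper subgroup contained in at least two maximal subgroups and subnormal in all but at most one of them lies in a proper normal subgroup. Your Case 2 is exactly the configuration that Flavell's theorem is designed to handle, and without either invoking it or reproving its content, your argument does not close.
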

\begin{proof} Clearly, we may assume that $\langle A^G\rangle=G$. Denote by $\Omega(A,G)$ the set of subgroups $H$ of $G$ with the property that $\langle A^H\rangle=H$, and let $Y$ be the unique maximal element of the set $\{F(A,H)\text{ : }H\in \mathcal{M}(A)\}$. Also, let $M$ be a maximal subgroup of $G$ containing $Y$.

Now, choose $X \in \Omega(A,G)$ maximal with respect to $X$ being contained in at least two maximal subgroups of $G$.
This set is not empty since $A$ has this property. If $L\neq M$ is any maximal subgroup of $G$ containing $X$,
observe that $X$ is the stable term in the normal closure series for $A$ in $L$ (by part (iii) of the previous lemma,
$X$ is contained in the stable term, which in turn is contained in $Y\le M$ and by maximality, it is the stable term).

Thus, $X$ is subnormal in all but at most one of the maximal subgroups in which it is contained. Flavell's Theorem \cite{Fl} then implies that $X$ is contained in a proper normal subgroup of $G$. Since $A\le X$, this completes the proof.\end{proof} 

We remark that this does indeed generalize the theorem of Flavell mentioned above. To see this, suppose that $G$ is finite, $A\lneq G$, $|\mathcal{M}(A)|\geq 2$, and $A$ is subnormal in all but at most one member, say $M$, of $\mathcal{A}$. We claim that $A$ is contained in a proper normal subgroup of $G$. Clearly, we may assume that $\langle A^G\rangle=G$. We first prove that $A=F(A,L)$ for any maximal subgroup $L\neq M$ containing $A$. Let $A=H_{d+1}<H_d<\hdots<H_1<H_0=L$ be a subnormal chain for $A<L$, where $H_i\unlhd H_{i+1}$. Consider the normal closure descending series $F(A,L)=L_m<\hdots<L_1<L_0=L$ for $A$ in $L$ as defined above. Then $L_1=A^L\le H_1$, and it follows via an easy inductive argument that ${L_i}\le H_{i}$ for all $i$. Thus, $A\le F(A,L)\le L_{d+1}\le H_{d+1}=A$, so $A=F(A,L)$, as claimed. It follows that the set $\{F(A,H)\text{ : }H\in \mathcal{M}(A)\}$ has a unique maximal element. The claim follows.    
 
We are now ready to prove Theorem \ref{MainTheorem}.
\begin{proof}[Proof of Theorem \ref{MainTheorem}]  Let $\Omega(A,G)$ be as in the proof of Lemma \ref{FlavellGen}, and let $Y = \langle H | H \in \Omega(A,G) \rangle$ . Assume that $Y \neq G$, and let $M$ be a maximal subgroup of $G$ containing $Y$. Then $Y=F(A,M)$. 

Now, every subgroup in $\Omega(A,G)$ is contained in $Y$ (and so in $M$). 
Suppose that $A$ is contained in some maximal subgroup $K \neq M$. Then $F(A,K)\in\Omega(A,G)$, so $A\le F(A,K)\le M$. Hence, $F(A,K)\le Y$ by Lemma \ref{TrLemma} part (iii). It follows that the set $\{F(A,H)\text{ : }H\in \mathcal{M}(A)\}$ has a unique maximal element. Thus, $A$ is contained in a proper normal subgroup of $G$, by Lemma \ref{FlavellGen}. This contradicts $\langle A^G\rangle=G$, and completes the proof.\end{proof} 
 



\section{Proof of Theorem \ref{JBound}}\label{JSection}
In this section, we prove Theorem \ref{JBound}. So throughout, we assume that $G$ is a finite group, and that $\al$ is an involutory automorphism of $G$ with the property that $[G,\al]=G$. Set $A=\langle G, \al \rangle$ and $J=J_G(\al)$. Recall that $J$ is the set of elements of odd order in $G$ inverted
by $\al$. 

Before proceeding to the proof of Theorem \ref{JBound}, we note two useful lemmas.
\begin{Lemma}\label{lem:useful}  $\bigcap_{j\in J} C_G(\al)^j = C_G(A) \le Z(G)$.   In particular, $\core_G(C_G(\alpha))$ is contained in $Z(G)$.
\end{Lemma}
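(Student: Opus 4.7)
The plan is to prove both inclusions of the equality $\bigcap_{j\in J} C_G(\alpha)^j = C_G(A)$ directly, then observe that $C_G(A) \le Z(G)$ is immediate and the core statement is a corollary.

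First I would unwind the definition of $C_G(A)$: since $A = \langle G,\alpha\rangle$, an element $x \in G$ centralizes $A$ iff it centralizes $G$ and commutes with $\alpha$, so
\[
C_G(A) \;=\; Z(G) \cap C_G(\alpha).
\]
In particular $C_G(A) \le Z(G)$, which handles the second assertion of the lemma. For the inclusion $C_G(A) \subseteq \bigcap_{j\in J} C_G(\alpha)^j$, if $x \in C_G(A)$ and $j \in J$, then $x$ commutes with $j$, so $x^j = x \in C_G(\alpha)$; hence $x \in C_G(\alpha)^j$.

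The main content is the reverse inclusion. Suppose $x \in \bigcap_{j\in J} C_G(\alpha)^j$. Taking $j = 1 \in J$ gives $x \in C_G(\alpha)$. For an arbitrary $j \in J$, the condition $x \in C_G(\alpha)^j$ means $jxj^{-1}\in C_G(\alpha)$, i.e.\ $(jxj^{-1})^\alpha = jxj^{-1}$. Using that $x$ is already $\alpha$-fixed and that $j^\alpha = j^{-1}$, the left-hand side equals $j^{-1}xj$, so $j^{-1}xj = jxj^{-1}$, giving $[x,j^2] = 1$. Since $j$ has odd order, $\langle j^2\rangle = \langle j\rangle$, so $x$ commutes with $j$ itself. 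Thus $x$ centralizes every element of $J$.

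Finally, I would invoke Theorem \ref{MainTheoremJ} (which depends on Theorem \ref{MainTheoremE}, already established): since $[G,\alpha] = G$ and $\alpha$ is an involution, $G = \langle J_G(\alpha)\rangle = \langle J\rangle$. Hence $x$ centralizes $G$, so $x \in Z(G) \cap C_G(\alpha) = C_G(A)$, proving the reverse inclusion. The statement about $\core_G(C_G(\alpha))$ then follows from $\core_G(C_G(\alpha)) = \bigcap_{g \in G} C_G(\alpha)^g \subseteq \bigcap_{j \in J} C_G(\alpha)^j = C_G(A) \le Z(G)$. There is no serious obstacle here; the only nontrivial input is the generation result $G = \langle J\rangle$, and the odd-order trick to pass from $j^2$ to $j$ is the crucial calculation.
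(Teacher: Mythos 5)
Your proposal is correct and follows essentially the same route as the paper: the paper notes that $\al\al^j=j^2$ so that $\langle \al,\al^j\rangle$ is a dihedral group containing $j$ (odd order), whence $\langle \al^j : j\in J\rangle=\langle J,\al\rangle=A$ and the intersection of the conjugated centralizers is $C_G(A)$; your element-wise computation $[x,j^2]=1\Rightarrow[x,j]=1$ is the same calculation unpacked, resting on the same two inputs ($j$ of odd order and $G=\langle J\rangle$ from Theorem \ref{MainTheoremJ}).
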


\begin{proof}   Note that if $j \in J$, then $\langle \al, \al^j \rangle$ is a dihedral group containing $j$ (since $j$ has odd order).
Thus $\langle \al^j | j \in J\rangle = \langle J, \al \rangle = A$ and the result follows. 
 \end{proof}

\begin{Lemma}\label{FLemma}
$[G:F(G)]$ can be bounded above in terms of $|J|$.
\end{Lemma}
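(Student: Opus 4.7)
The plan is to reduce to the case $Z(G)=1$ by passing to the quotient $\bar{G} := G/Z^{*}(G)$, where $Z^{*}(G)$ is the hypercenter of $G$, and then to apply the main theorem of Hartley and Meixner \cite{HM}, which bounds $[H:F(H)]$ in terms of $|C_{H}(\beta)|$ for any involutory automorphism $\beta$ of a finite group $H$. (If $\bar{G}=1$ then $G$ is nilpotent and there is nothing to prove.)

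The reduction rests on three structural observations about $Z^{*} := Z^{*}(G)$. First, $Z^{*}$ is characteristic hence $\alpha$-invariant, $\bar{G}$ has trivial centre, and $[\bar{G},\alpha]=\bar{G}$ is inherited from $[G,\alpha]=G$. Second, $F(\bar{G}) = F(G)/Z^{*}$, so that $[\bar{G}:F(\bar{G})] = [G:F(G)]$: the non-trivial inclusion uses that the preimage $F'$ of $F(\bar{G})$ in $G$ contains $Z^{*}$ inside its own hypercenter (an induction on the upper central series showing $Z_{i}(G) \leq Z_{i}(F')$), so $F'/Z^{*}(F')$ is a quotient of the nilpotent group $F'/Z^{*}$ and simultaneously centreless by definition, hence trivial, forcing $F'$ to be nilpotent. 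Third --- the key lifting lemma --- the natural projection restricts to a surjection $J_{G}(\alpha) \twoheadrightarrow J_{\bar{G}}(\alpha)$. Given $\bar{g} \in J_{\bar{G}}(\alpha)$, lift to $g \in G$ of the same odd order (for instance, the odd part of any lift) and write $g^{\alpha} = g^{-1}z$ with $z \in Z^{*}$. Two short calculations, using $\alpha^{2}=1$ and the centrality of $z$, give $z^{\alpha} = z$ (from $g = g^{\alpha^{2}}$) and $z^{|g|} = 1$ (by raising the identity to the $|g|$-th power). Thus $z$ lies in the odd-order, $\alpha$-fixed part $C_{Z^{*}}(\alpha)_{2'}$ of $Z^{*}$, an abelian group of odd order, so it admits a unique square root $z_{0}$ with $z_{0}^{2}=z^{-1}$; then $g' := gz_{0}$ has odd order, projects to $\bar{g}$, and satisfies $(g')^{\alpha} = g^{-1}zz_{0} = g^{-1}z_{0}^{-1} = (g')^{-1}$, giving the desired preimage in $J_{G}(\alpha)$. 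Consequently $|J_{\bar{G}}(\alpha)| \leq |J_{G}(\alpha)|$.

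Once the reduction is in place, assume $Z(G)=1$. Theorems \ref{MainTheoremE} and \ref{MainTheoremJ} together yield $G = \langle J \rangle$, hence $C_{G}(J) = Z(G) = 1$; combining this with Lemma \ref{lem:useful} shows that the conjugation action of $C_{G}(\alpha)$ on $J^{\#} := J \setminus \{1\}$ is faithful. Since $\alpha$ commutes with $C_{G}(\alpha)$ and acts as a fixed-point-free involution on $J^{\#}$ (any non-trivial odd-order element fixed and inverted by $\alpha$ would satisfy $g^{2}=1$), the image of $C_{G}(\alpha)$ lies inside the centraliser of $\alpha|_{J^{\#}}$ in $\Sym(J^{\#})$, a wreath product of order $2^{m}m!$ with $m := |J^{\#}|/2$. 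Invoking \cite{HM} then gives $[G:F(G)] \leq f(|C_{G}(\alpha)|) \leq f(2^{m}m!)$, a function of $|J|$ alone.

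The main obstacle I anticipate is the surjectivity step of the reduction. Because $\alpha$ need not act fixed-point-freely on the $2$-torsion of $Z(G)$, one could fear that elements of $J_{\bar{G}}(\alpha)$ only lift through genuinely $2$-torsion central coset representatives. The calculation that the defect $z$ is forced into the odd-order $\alpha$-fixed component of $Z^{*}$ is precisely what neutralises this worry and makes the square-root trick available.
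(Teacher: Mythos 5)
Your treatment of the centreless case is essentially the paper's own argument: there, $G=\langle J\rangle$ (Theorem \ref{MainTheoremJ}) gives a faithful conjugation action of $C_G(\alpha)$ on $J\setminus\{1\}$, whence $|C_G(\alpha)|\le(|J|-1)!$, and the main theorem of \cite{HM} converts a bound on $|C_G(\alpha)|$ into a bound on $[G:F(G)]$. Your sharpening of the centralizer bound to $2^m m!$ via the fixed-point-free action of $\alpha$ on $J^{\#}$ is correct (it is the estimate quoted in the introduction in connection with Corollary \ref{JCor}).

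The gap is in your reduction to $Z(G)=1$. You write $g^{\alpha}=g^{-1}z$ with $z\in Z^{*}(G)$ and then compute ``using the centrality of $z$''; but the hypercentre is not central, and if $Z_2(G)>Z_1(G)$ a typical $z\in Z^{*}(G)$ does not commute with $g$. The identity $g=g^{\alpha^{2}}$ then yields only $z^{\alpha}=z^{g}$, not $z^{\alpha}=z$; the expansion of $(g^{-1}z)^{|g|}$ is a product of distinct conjugates of $z$ rather than $z^{|g|}$; and even granted a square root $z_0$ of $z^{-1}$, both the verification $(gz_0)^{\alpha}=(gz_0)^{-1}$ and the claim that $gz_0$ has odd order use $[g,z_0]=1$. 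So the surjectivity of $J_G(\alpha)\to J_{\bar G}(\alpha)$ is not established as written. (A second, harmless, slip: $Z^{*}(G)$ is nilpotent but need not be abelian; the square-root trick only needs $z$ to have odd order, so this one is cosmetic.) The statement you want is true and your computation is valid one central step at a time: replace the single quotient by $Z^{*}(G)$ with an induction along the upper central series, factoring out the genuine centre at each stage --- where $z\in Z(G)$ really is central --- and noting that $[G:F(G)]$ and the bound $|J_{G/Z(G)}(\alpha)|\le|J_G(\alpha)|$ persist until the centre becomes trivial. For comparison, the paper's reduction avoids the hypercentre altogether: it first passes to $G/\Phi(G)$, so that $Z(G)$ becomes a direct product of prime-order groups, splits off a central direct factor $B$ of (necessarily odd) prime order as $G=M\times B$ with $M$ $\alpha$-invariant and $[M,\alpha]=M$, and inducts on the group order; your route, once repaired, is a legitimate alternative.
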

\begin{proof}
Note first that since $G=[G,\al]$,
$\al$ acts without fixed points on $G/[G,G]$, and so acts by inversion. In particular, $|G/[G,G]|$ is odd. 

Now, there is no harm in assuming that $\Phi(G)=1$ since $G/F(G)$ does not change. It follows that $\Phi(Z(G))=1$. That is, $Z(G)$ has square-free order.
If $Z(G) \ne 1$, we can therefore write $G = M \times B$ where
$B\le Z(G)$ has prime order. Thus, $[G,G]=[M,M]\le M$ and so $|B|$ is odd and $M$ is $\al$-invariant. It also follows that $[M,\al]=M$. Furthermore, $[G:F(G)]=[M:F(M)]$. The inductive hypothesis then yields the result. 

Thus, we may assume that $Z(G)=1$.
Since $J$ generates by $G$ by Theorem \ref{MainTheoremJ}, it follows that
$C_G(\al)$ acts faithfully on $J\setminus{\{1\}}$ and so $|C_G(\al)| \le (|J|-1)!$.   The main theorem in \cite{HM} (or \cite{GR} if we allow the Classification of Finite Simple Groups) then bounds $[G:F(G)]$ in terms of $|J|$.   
\end{proof}

We are now ready to prove Theorem \ref{JBound}.
\begin{proof}[Proof of Theorem \ref{JBound}]
We prove the theorem by induction on $|J|$.    If $|J|=1$, then $G = \langle J \rangle$ is trivial and the result holds.
So we assume that $J$ and $G$ are both nontrivial. 

We first note the following elementary and well known
observation.   If $Ng \in J_{G/N}(\al)$,  with $N$ a normal subgroup of $A$ contained in $G$, then 
there is $h  \in Ng \cap J$  ($b:=[\al,g]$ is inverted by $g$ and is in the coset $Ng^2$ -- now  take $h = b^e$
for $e$ some power of $2$).  

In particular, if $N$ is a normal subgroup of $A$ contained in $G$ and $J \cap N$ is non-trivial, then 
\begin{align}\label{JGN}
 |J_{G/N}(\al)| < |J|.   
\end{align}
Now, as mentioned in the proof of Lemma \ref{FLemma}, $|G/[G,G]|$ has odd order and
$\al$ acts without fixed points and so acts by inversion.  In particular, 
\begin{align}\label{GGP}
  \text{$|G/[G,G]| \le |J|$ and $C_G(\al) \le [G,G]$. }  
\end{align}

Next, set $L:=[O(Z(G)),\al]$. If $L>1$, then since $L\subseteq J$, the inductive hypothesis and (\ref{JGN}) imply that  $|G/L|$ is bounded in terms of $|J|$. Thus, $|G|$ can be bounded in terms of $|J|$. So we may assume that 
\begin{align}\label{L}
[O(Z(G)),\al]=1.    
\end{align}

Now, by Lemma \ref{FLemma}, we may assume that $F(G) \ne 1$. If $F(G)=Z(G)$,  then again by Lemma \ref{FLemma}, 
$|G/Z(G)|$ can be bounded in terms of $|J|$.  
Schur's theorem then gives a bound for $|[G,G]|$ in terms of $|G/Z(G)|$ and again the result holds, using (\ref{GGP}).
 
Now, let $N/Z(G)$ be a minimal normal subgroup of $A/Z(G)$ contained in $F(G)/Z(G)$.
So $N/Z(G)$ is an elementary abelian $p$-group for some prime $p$.   

First suppose that $p$ is odd. 
We claim that 
\begin{align}\label{C}
\al\text{ does not centralize }N/Z(G).   
\end{align}
Indeed, since $N$ is nilpotent, we may write $N=O_p(N)Z(G)$.  
Then $[N,\al]=[O_p(N),\al] \le O_p(Z(G))$. Since
$\al$ centralizes $O(Z(G))$ by (\ref{L}), we have $[N,\al,\al]=1$. Since $O_p(N)$ is nilpotent of odd order, Baer's Theorem then implies that $\al$ centralizes $O_p(N)$. Hence, $O_p(N) \le \core_G(C_G(\al))\le Z(G)$ by Lemma \ref{lem:useful}. This contradiction proves (\ref{C}).

Now, (\ref{C}) implies that $N/Z(G) \cap J_{G/Z(G)}(\alpha)$ is non-trivial. Thus, $J\cap Z(G)n>1$ for some $n\in N$, by the assertion in the first paragraph. It follows that $p\le |J\cap N|\le |J|$ and,  from (\ref{JGN}), that $|J_{G/N}(\al)| < |J|$. Thus, $G/N$ has order bounded in terms of $|J|$. Since $N/Z(G)$ is an irreducible module for $G/N$ and $p$ is $|J|$-bounded, we deduce that $|N/Z(G)|$,  and hence $G/Z(G)$,
has order bounded order in terms of $|J|$. Applying Schur's  theorem and (\ref{GGP})   then yields a bound for $|G|$ in terms of $|J|$.

Thus, we may assume that $O_p(G) \le Z(G) $ for all odd primes $p$.   Suppose now that $p=2$. Then arguing with Baer's Theorem as above, and
using the fact that $J$ consists of elements of odd order and $G = \langle J \rangle$, we see that 
$G$ does not centralize $N/Z(G)$. Let $x \in J$ be an element of odd prime order acting non-trivially
on $N/Z(G)$.  

Let $w \in N$.  Then $(wx)^{\al} = w^{\al}x^{-1} $ and so $\al$ inverts $wx$ if and only if
$x^{-1}w^{-1} = w^{\al}x^{-1}$.  Let $V/Z(G)$ be a  $\langle \al, x\rangle$-submodule of $N/Z(G)$ with $x$
acting nontrivially.  Choose  $Z(G)v\in V/Z(G)$ so that $x$ acts non-trivially on $V$. Then set $w:=v$ if $\al x^{-1}$ centralizes $v$, and $w:=vv^{\al x^{-1}}$ otherwise. It is then a straightforward computation to see that $Z(G)wx$ has order $|x|$ and $Z(G)x^{-1}w^{-1} = Z(G)w^{\al}x^{-1}$. Thus, $Z(G)wx\in J_{G/Z(G)}(\al)\cap N/Z(G)$. 
We can then deduce from (\ref{JGN}), as in the $p$ odd case above, that $|J_{G/N}(\al)| < |J|$ and so $G/N$ is $|J|$-bounded by the inductive hypothesis.
Since $N/Z(G)$ is an irreducible module for $G/N$ and $p=2$, this implies that $|N/Z(G)|$, and hence $|G/Z(G)|$, is $|J|$-bounded. Schur's Theorem  then implies that $|G|$ is $|J|$-bounded, and this completes the proof. 
\end{proof}

\end{document}